\newtheorem{thm}{Theorem}[section]
\newtheorem{lem}[thm]{Lemma}
\newtheorem{rem}[thm]{Remark}
\newcommand{\vertiii}[1]{{\left\vert\kern-0.25ex\left\vert\kern-0.25ex\left\vert #1 
    \right\vert\kern-0.25ex\right\vert\kern-0.25ex\right\vert}}
\newcommand{\bdm}{\begin{displaymath}}
\newcommand{\edm}{\end{displaymath}}
\newcommand{\beq}{\begin{equation}}
\newcommand{\eeq}{\end{equation}}
\newcommand{\beqa}{\begin{eqnarray}}
\newcommand{\eeqa}{\end{eqnarray}}
\newcommand{\beqas}{\begin{eqnarray*}}
\newcommand{\eeqas}{\end{eqnarray*}}
\newcommand{\divv}{{\rm div}}
\newcommand{\gradt}{\nabla\cdot}
\newcommand{\btau}{\mbox{\boldmath$\tau$}}
\newcommand{\bbeta}{\mbox{\boldmath$\beta$}}
\newcommand{\bsigma}{\mbox{\boldmath$\sigma$}}
\newcommand{\bv}{{\bf v}}
\newcommand{\bx}{{\bf x}}
\newcommand{\bE}{{\bf E}}
\newcommand{\bn}{{\bf n}}
\newcommand{\cE}{{\cal E}}
\newcommand{\cT}{{\cal T}}
\newcommand{\cO}{{\cal O}}
\newcommand{\cf}{{\cal F}}
\newcommand{\cu}{{\cal U}}
\newcommand{\Om}{\Omega}
\newcommand{\Ho}{H^1(\Omega)}
\newcommand{\Hdiv}{H(\divv;\,\Omega)}
\title{Adaptive Least-Squares Methods for Convection-Dominated Diffusion-Reaction Problems}
\author{Zhiqiang Cai\thanks{Department of Mathematics, Purdue University,
150 N. University Street, West Lafayette, IN 47907-2067, zcai@math.purdue.edu. This work was supported in part by the National Science Foundation under grants DMS-1217081 and DMS-1522707.
}
\and Binghe Chen\thanks{Wells Fargo Corporate \& Investment Banking, Charlotte, NC 28202-4200, binghe.chen@wellsfargo.com.}
\and Jing Yang\thanks{School of Mathematical Science, Peking University, No.5 Yiheyuan Road Haidian District, Beijing, P.R.China 100871, yangjingmath@pku.edu.cn.}}
\date{}
\begin{document}
\maketitle

\setcounter{page}{1}
\parskip=0pt

\begin{abstract}
This paper studies adaptive least-squares finite element methods for convection-dominated diffusion-reaction problems. 
The least-squares methods are based on the first-order system of the primal and dual variables with various ways of 
imposing outflow boundary conditions. 
The coercivity of the homogeneous least-squares functionals are established,
and the a priori error estimates of the least-squares methods are obtained in a norm that incorporates the streamline derivative. 
All methods have the same convergence rate provided that meshes in the layer regions are fine enough. 
To increase computational accuracy and reduce computational cost, adaptive least-squares methods are  implemented and 
numerical results are presented for some test problems.
\end{abstract}



 
\pagestyle{myheadings}
\thispagestyle{plain}
{ADAPTIVE FOSLS FOR THE CONVECTION-DOMINATED PROBLEMS}

\section{Introduction}
\label{sec:intro}
\setcounter{equation}{0}

Due to the small diffusion coefficient, the solution of the convection-dominated diffusion-reaction problem develops the boundary or interior layers, 
i.e., narrow regions where derivatives of the solution change dramatically. 
It is well known that the conventional numerical methods do not work well on either stability or accuracy for such problems. For example, the standard Galerkin method with continuous linear elements exhibits large spurious oscillation in the boundary layer region. 
Over the decades, many successful numerical methods have been studied and may be roughly grouped into three categories: the mesh-fitted approach, the operator-fitted approach, and the stabilization approach. 
The mesh-fitted approach utilizes the a priori information of the solution including the location and the width of the layer to construct a layer-fitted mesh, e.g., the Shishkin mesh. The operator-fitted approach applies the layer-alike functions as the bases of the approximation space. The stabilization approach adds some stabilization term to the bilinear form. 
For example, the well-known streamline upwind Petrov-Galerkin (SUPG) method \cite{hb:82} adds the original equation tested by the convection term as the stabilization.
For a comprehensive collection of the methods, see \cite{Roos:08} and the references therein.  

Recently, least-squares methods have been intensively studied for fluid flow and elasticity problems (see, e.g., \cite{AziKel:85, bg2, BocGun:95, BocGun:98, brr, clw, cmm1, CaiManMC:97}). 
The least-squares methods minimize certain norms of the residual of the first-order system over appropriate finite element spaces. 
The method always leads to a symmetric positive definite problem, and choices of finite element spaces for the primal and dual variables are not subject to the LBB condition. 
Moreover, one striking feature of the least-squares method is that the value of the least-squares functional at the current
approximation provides an accurate estimates of the true error. 

The application of the least-squares methods to the convection-dominated diffusion-reaction problems is still in its infancy. Reported in \cite{chenfuliqiu} is a new least-squares formulation with inflow boundary conditions weakly imposed and outflow boundary conditions ultra-weakly imposed. This formulation works well on regions away from the boundary layer, even on coarse meshes. However, it does not resolve the boundary layer, which is the primary interest of the problem. This phenomena is also observed in the DG method~\cite{am09}, where the boundary conditions are weakly imposed. These works motivate us to treat outflow boundary conditions in different fashions. In particular, 
we study least-squares method for the convection-dominated diffusion-reaction problem with three different ways to handle the outflow boundary conditions.
The a priori error estimates of finite element approximations based on these formulations are established.
 
The solution of the convection-dominated diffusion-reaction problem usually consists of two parts: the solution of a transport problem ($\epsilon = 0$) and the correction (i.e., the boundary layer). To compute the first part, it is sufficient to use a coarse mesh, while it requires a very fine mesh to resolve the boundary layer. 
Without the a priori information on locations of the layers, this observation motivates the use of adaptive mesh refinement algorithm, which has been 
vastly studied (see, e.g., \cite{ang:95,aabr:13,ber:02,bv:84,dor:96,ver:94}). 
However, many a posteriori error estimators are not suitable for the convection-dominated diffusion-reaction problems, since they depend on the small diffusion parameter. 
To design a robust a posteriori error estimator is non-trivial. Nevertheless, for a least-squares formulation, the a posteriori error estimator is handy, which is simply the value of the least-squares functional at the current approximation. Since the least-squares functional has been computed when solving the algebraic equation, there is no additional cost. Besides, the reliability and the efficiency stem easily from the coercivity and the continuity of the bilinear form, respectively. In this paper, we present numerical results of adaptive mesh refinement algorithms using the least-squares estimator.

The rest of this paper is organized as follows. In section~\ref{sec:dcprob}, we present the convection-dominated diffusion-reaction problem and its first-order linear system. Based on the first-order system, three least-squares formulations are introduced and their coercivity are established in section~\ref{sec:lsform}. Section~\ref{sec:mdn} is a computable counterpart of the previous section, which introduces the computable mesh dependent norms to replace the fractional norms in the least-squares functionals. The main objective of section~\ref{sec:fea} is to establish the a priori error estimates. The adaptive mesh refinement algorithm and the numerical tests are exhibited in section~\ref{sec:AMR} and section~\ref{sec:numtest}, respectively.

\subsection{Notation}
\label{ssec:notation}

We use the standard notation and definitions for the Sobolev
spaces $H^s(\Om)^d$ and $H^s(\partial\Om)^d$ for $s\ge 0$. The
standard associated inner products are denoted by $(\cdot , \,
\cdot)_{s,\Om}$ and $(\cdot , \, \cdot)_{s,\partial\Om}$, and
their respective norms are denoted by $\|\cdot \|_{s,\Om}$ and
$\|\cdot\|_{s,\partial\Om}$. (We suppress the superscript $d$
because the dependence on dimension will be clear by context. We
also omit the subscript $\Om$ from the inner product and norm
designation when there is no risk of confusion.) For $s=0$,
$H^s(\Om)^d$ coincides with $L^2(\Om)^d$. In this case, the inner
product and norm will be denoted by 
$(\cdot,\,\cdot)$ and $\|\cdot\|$, respectively. Finally, we define some spaces
\[
H^{1}_{D}(\Omega) := \{q\in \Ho\,:\,q=0\,\,\mbox{on}\,\,\Gamma_{D}\},
\]
\[
H^{1}_{D^{\pm}}(\Omega) := \{q\in \Ho\,:\,q=0\,\,\mbox{on}\,\,\Gamma_{D^{\pm}}\},
\]
and 
\[
H(\divv;\Om)=\{\bv\in L^2(\Om)^2\, :\,\gradt\bv\in L^2(\Om)\},
\]
which is a Hilbert space under the norm
\[
\|\bv\|_{H(\divv;\,\Om)}=\left(\|\bv\|^2+\|\gradt\bv\|^2
\right)^\frac12.
\]


\section{The convection-diffusion-reaction problem}
\label{sec:dcprob}
\setcounter{equation}{0}

Let $\Om$ be a bounded, open, connected subset in $R^{d}\, (d = 2, 3)$ with a Lipschitz continuous boundary $\partial \Om$. Denote by $\bn = (n_{1},\cdots,n_{d})^{t}$ the outward unit vector normal to the boundary. For a given vector-valued function $\bbeta$, denote by 
\[
\Gamma_{+} = \{ \bx\in\partial\Om\, :\, \bbeta\cdot\bn (\bx) > 0\}\quad \text{and}\quad \Gamma_{-} = \{ \bx\in\partial\Om\, :\, \bbeta\cdot\bn (\bx) < 0\}
\]
the outflow and inflow boundaries, respectively.

Consider the following stationary convection-dominated diffusion-reaction problem:
\begin{eqnarray}\label{eqn:dif}
-\epsilon\,\Delta u + \bbeta\cdot\nabla u+c\, u = f\quad \mbox{in}\,\,\Om,
\end{eqnarray}
where the diffusion coefficient $\epsilon$ is a given small constant, i.e., $0<\epsilon\ll 1$;
and $c$ and $f$ are given scalar-valued functions. For simplicity, we consider 
homogeneous Dirichlet boundary condition:
\begin{eqnarray}\label{bcd:dir:dif}
u|_{\partial \Omega} = 0. 
\end{eqnarray}
For the convection and reaction coefficients, we assume that:
\begin{enumerate}
\item[(1)] $\bbeta \in W^{1}_{\infty}(\Om)^{d}$\, and\, $c \in L^{\infty}(\Om)$ with $\| c\|_{\infty} \leq \gamma$;
\item[(2)] there exists a positive constant $\alpha_{0}$ such that 
\begin{eqnarray}\label{ass:coer}
0 < \alpha_{0} \leq c - \frac{1}{2}\nabla\cdot\bbeta\quad \text{a.e. in}\,\, \Om .
\end{eqnarray}
\end{enumerate}

Introducing the dual variable 
\[
\bsigma = -\epsilon^{1/2}\nabla u,
\] 
(\ref{eqn:dif}) may be rewritten as the following first-order system:
\begin{eqnarray}\label{sys:dif:eq1}
\left\{
\begin{array}{lllc}
\bsigma + \epsilon^{1/2}\nabla u &=& 0 \quad &\mbox{in}\,\,\Om,\\[2mm]
\epsilon^{1/2}\nabla\cdot\bsigma + \bbeta\cdot\nabla u + c\, u & =& f \quad &\mbox{in}\,\,\Om.
\end{array}
\right.
\end{eqnarray}


\section{Least-squares formulations}
\label{sec:lsform}
\setcounter{equation}{0}

In this section, we study three least-squares formulations based on the first-order system 
in (\ref{sys:dif:eq1}) with the inflow boundary conditions imposed strongly. 
These formulations differ in how to handle the outflow boundary conditions. 
More specifically, the outflow boundary conditions are treated strongly for the first one
and weakly for the other two through weighted boundary functionals.

To this end, introduce the following least-squares functionals:
 \begin{eqnarray} \label{fnal:s}
  G_{1}(\btau,\,v;\,f) 
 &=& \| \btau+\epsilon^{1/2}\,\nabla v\|^{2}
         +\|\epsilon^{1/2}\,\nabla\cdot\btau+\bbeta\cdot\nabla v+c\, v-f\|^{2},\\[2mm]
 \label{fnal:w}
 G_{2}(\btau,\,v;\,f)
  &=& G_{1}(\btau,\,v;\,f)+\|\epsilon^{-1/2}\,v\|^{2}_{1/2,\Gamma_{+}}, \\[2mm]
 \label{fnal:normalw}
 \mbox{and}\quad G_{3}(\btau,\,v;\,f) 
  &=&G_{1}(\btau,\,v;\,f)+\|v\|^{2}_{1/2,\Gamma_{+}}.
\end{eqnarray}
Since $\epsilon$ is very small, the outflow boundary conditions are enforced stronger in
$G_2$ than in $G_3$. 
Let
\[
\cu_{1} = H(\divv;\,\Om) \times H^{1}_0(\Om)
\quad\mbox{and}\quad
\cu_{2} = \cu_{3} = H(\divv;\,\Om) \times H^{1}_{\Gamma_{-}}(\Om).
\]
Then the least-squares formulations are to find $(\bsigma,\,u)\in \cu_i$ such that
 \beq\label{lsf}
   G_{i}(\bsigma,\,u;\,f) = \min_{(\btau,\,v)\in\, \cu_i} G_{i}(\btau,\,v;\,f)
 \eeq
for $i=1,\,2,\,3$.

For any $(\btau,\,v)\in \cu_i$, define the following norms: 
 \begin{eqnarray*}
 && M_{1}(\btau,\,v)
 =\|\btau\|^{2}+\| v\|^{2}+\| \epsilon^{1/2}\,\nabla v\|^{2},
 \quad M_{2}(\btau,\,v)= M_1(\btau,\,v)
  +\| \epsilon^{-1/2}\,v\|^{2}_{1/2,\Gamma_{{+}}},\\[2mm]
\mbox{and} &&
 M_{3}(\btau,\,v)= M_1(\btau,\,v)+\|v\|^{2}_{1/2,\Gamma_{{+}}}.
 \end{eqnarray*}
Below we show that the homogeneous least-squares functionals are coercive 
with respect to the corresponding norms. In particular, the coercivity of the functionals
$G_1$ and $G_2$ are independent of the $\epsilon$.

\begin{thm}[Coercivity]\label{thm:2w}\leavevmode
For all $(\btau,\,v)\in \cu_i$ with $i=1,\,2,\,3$, there exist positive constants $C_i$ such that
 \beq\label{coercivity}
 M_{i}(\btau,\,v)
 \leq C_i\, G_{i}(\btau,\,v;\,0),
 \eeq
where $C_1$ and $C_2$ are independent of the $\epsilon$ and $C_3$ is proportional 
to $\epsilon^{-1/2}$.
\end{thm}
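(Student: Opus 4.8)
The plan is to introduce the two residuals $r_{1}(\btau,v)=\btau+\epsilon^{1/2}\nabla v$ and $r_{2}(\btau,v)=\epsilon^{1/2}\,\nabla\cdot\btau+\bbeta\cdot\nabla v+c\,v$, for which $G_{1}(\btau,v;0)=\|r_{1}\|^{2}+\|r_{2}\|^{2}$, and to drive everything from a single energy identity obtained by testing the second residual against $v$. First I would write $(r_{2},v)=\epsilon^{1/2}(\nabla\cdot\btau,v)+(\bbeta\cdot\nabla v+c\,v,v)$, integrate by parts in the first term, substitute $\epsilon^{1/2}\nabla v=r_{1}-\btau$, and use the Green identity $(\bbeta\cdot\nabla v,v)=-\tfrac12((\nabla\cdot\bbeta)v,v)+\tfrac12\langle(\bbeta\cdot\bn)v,v\rangle_{\partial\Om}$. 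Because $v=0$ on $\Gamma_{-}$ in each space $\cu_{i}$, this yields the master inequality
\[
\|\btau\|^{2}+\alpha_{0}\|v\|^{2}+\tfrac12\langle(\bbeta\cdot\bn)v,v\rangle_{\Gamma_{+}}\le(\btau,r_{1})+(r_{2},v)-\epsilon^{1/2}\langle\btau\cdot\bn,v\rangle_{\Gamma_{+}},
\]
where assumption (2) supplies the coercive bound $((c-\tfrac12\nabla\cdot\bbeta)v,v)\ge\alpha_{0}\|v\|^{2}$ and the outflow integral on the left is nonnegative since $\bbeta\cdot\bn>0$ on $\Gamma_{+}$.

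For $i=1$ the space is $H(\divv;\Om)\times H^{1}_{0}(\Om)$, so $v$ vanishes on all of $\partial\Om$ and the boundary pairing $\langle\btau\cdot\bn,v\rangle_{\Gamma_{+}}$ disappears. I would then bound $(\btau,r_{1})$ and $(r_{2},v)$ by Cauchy--Schwarz and Young's inequality, absorbing $\tfrac12\|\btau\|^{2}$ and $\tfrac{\alpha_{0}}{2}\|v\|^{2}$ into the left-hand side to obtain $\|\btau\|^{2}+\|v\|^{2}\le C\,G_{1}(\btau,v;0)$ with $C$ depending only on $\alpha_{0}$. The last ingredient of $M_{1}$ comes for free from the first residual, $\|\epsilon^{1/2}\nabla v\|=\|r_{1}-\btau\|\le\|r_{1}\|+\|\btau\|$, so $M_{1}\le C_{1}G_{1}(\btau,v;0)$ with $C_{1}$ independent of $\epsilon$.

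For $i=2,3$ the trace $v|_{\Gamma_{+}}$ survives, and the entire difficulty concentrates in the boundary term $\epsilon^{1/2}\langle\btau\cdot\bn,v\rangle_{\Gamma_{+}}$. I would estimate it by the $H(\divv;\Om)$ normal-trace theorem $\|\btau\cdot\bn\|_{-1/2,\Gamma_{+}}\le C(\|\btau\|+\|\nabla\cdot\btau\|)$ together with duality, and then absorb $\|v\|_{1/2,\Gamma_{+}}$ using the extra boundary functional. The essential device is to never estimate $\|\nabla\cdot\btau\|$ directly (it blows up like $\epsilon^{-1}$) but to route it through the residual, $\epsilon^{1/2}\|\nabla\cdot\btau\|=\|r_{2}-\bbeta\cdot\nabla v-c\,v\|$, so that the convective piece reappears only through the controlled combination $\|\epsilon^{1/2}\nabla v\|$. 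For $G_{2}$, rewriting $\|v\|_{1/2,\Gamma_{+}}=\epsilon^{1/2}\|\epsilon^{-1/2}v\|_{1/2,\Gamma_{+}}$ furnishes exactly the additional factor $\epsilon^{1/2}$ that collapses the resulting prefactor $\epsilon\|\btau\cdot\bn\|_{-1/2,\Gamma_{+}}$ to the $O(1)$ quantities $\epsilon^{1/2}\|r_{2}\|$, $\|\bbeta\|_{\infty}\|\epsilon^{1/2}\nabla v\|$ and $\epsilon^{1/2}\|v\|$; Young's inequality against the controlled $\|\epsilon^{-1/2}v\|_{1/2,\Gamma_{+}}$ then closes with an $\epsilon$-independent $C_{2}$. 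For $G_{3}$ the unweighted functional supplies no such factor, the prefactor remains of size $\epsilon^{1/2}\|\btau\cdot\bn\|_{-1/2,\Gamma_{+}}=O(\epsilon^{-1/2})$ through $\|\nabla v\|=\epsilon^{-1/2}\|\epsilon^{1/2}\nabla v\|$, and this uncompensated half-power is the origin of the factor $\epsilon^{-1/2}$ in $C_{3}$.

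The crux is thus the outflow boundary term and the bookkeeping of its $\epsilon$-powers. The delicate point in the $G_{3}$ case is to extract the sharp $\epsilon^{-1/2}$ rather than a crude $\epsilon^{-1}$: this forces one to balance the $\|v\|_{1/2,\Gamma_{+}}$ pairing against the outflow control $\tfrac12\langle(\bbeta\cdot\bn)v,v\rangle_{\Gamma_{+}}$ already sitting on the left-hand side (equivalently, to interpolate the boundary trace between its $L^{2}(\Gamma_{+})$ norm and its $H^{1}$-type bound) rather than to discard it. I expect this balancing, together with a choice of weights in Young's inequality that keeps every surviving term controlled either by $M_{1}$ uniformly or by the correctly weighted boundary functional, to be the only genuinely technical step; the remaining manipulations are routine integrations by parts and applications of Cauchy--Schwarz.
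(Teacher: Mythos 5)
Your proposal is correct and follows essentially the same route as the paper's proof: the identical energy identity obtained by testing the second residual against $v$ (the paper's quantity $I$), the identical device of bounding $\|\btau\cdot\bn\|_{-1/2,\Gamma_{+}}$ by the $H(\divv;\Om)$ normal-trace theorem while routing $\|\nabla\cdot\btau\|$ through the residual rather than estimating it directly, and the identical $\epsilon$-weighting that separates $G_2$ from $G_3$; the only cosmetic difference is that your master inequality keeps $\|\btau\|^2$ on the left and recovers $\|\epsilon^{1/2}\nabla v\|$ afterwards by the triangle inequality, whereas the paper does the mirror image. The extra balancing against $\tfrac12\langle(\bbeta\cdot\bn)v,\,v\rangle_{\Gamma_{+}}$ that you anticipate needing to sharpen $C_3$ is not actually carried out in the paper, which simply absorbs the cross term via Young's inequality.
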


\begin{proof}
We provide proofs for $i=2$ and $3$ in detail with an emphasis on how the weight
in $G_2$ leads to the coercivity constant independent of the $\epsilon$. The case of
$i=1$ may be proved in a similar fashion as the case of $i=2$.

For all $(\btau,\,v)\in\cu_{i}$ with $i=1,\,2,\,3$, the triangle inequality gives
\beq\label{3.5a}
 \| \btau\|
 \leq \| \btau+\epsilon^{1/2}\,\nabla v\|
      +\|\epsilon^{1/2}\,\nabla v\|
 \leq G^{1/2}_{1}(\btau,\,v;\,0) + \|\epsilon^{1/2}\,\nabla v\|.
\eeq
Hence, to show the validity of (\ref{coercivity}), it suffices to prove that
 \beq\label{3.6}
 \| v\|^{2}+\|\epsilon^{1/2}\,\nabla v\|^{2} 
 \leq C_i\, G_{i}(\btau,\,v;\,0)
 \quad \forall\,\, (\btau,\,v)\in\cu_{i}.
 \eeq

To this end, let
\beq\label{def:I}
 I = -\Big(\epsilon^{1/2}\,\nabla v,\,\btau\Big)
 +\Big(v,\,(c-\frac{1}{2}\,\nabla\cdot\bbeta)\, v\Big)
 +\dfrac{1}{2}\,\|(\bbeta\cdot\bn)^{1/2}\,v\|^{2}_{0,\Gamma_{{+}}}.
 \eeq
It follows from the definition of the outflow boundary condition and the Cauchy-Schwarz inequality that
\[
\|\epsilon^{1/2}\,\nabla v\|^{2}+\alpha_{0}\,\|v\|^{2}
 \leq (\epsilon^{1/2}\,\nabla v,\,\epsilon^{1/2}\nabla v+\btau)+I 
 \leq \|\epsilon^{1/2}\,\nabla v\| \, G_{1}(\btau,\,v;\, 0)+ I,
\]
which implies
\begin{eqnarray}\label{ineq:ugradu}
 \|\epsilon^{1/2}\,\nabla v\|^{2}+\|v\|^{2} 
 \leq C\,\left(G_{1}(\bsigma,u;0)+ I\right).
\end{eqnarray}
To bound $I$, first note that integration by parts and the boundary conditions imply that
\begin{eqnarray*}\label{eqn:dif:usigw}
 (\epsilon^{1/2}\,\nabla v,\,\btau)
 &=& (v,\,\epsilon^{1/2}\,\btau\cdot\bn)_{\partial\Om}
 -(\epsilon^{1/2}\, v,\, \nabla\cdot\btau)  
 = (v,\,\epsilon^{1/2}\,\btau\cdot\bn)_{\Gamma_{{+}}}
     -(\epsilon^{1/2}\, v,\, \nabla\cdot\btau)\\[2mm] \nonumber
 &=&(v,\,\epsilon^{1/2}\,\btau\cdot\bn)_{\Gamma_{{+}}}
   +(v,\,c\,v)-(v,\,\epsilon^{1/2}\,\nabla\cdot\btau+\bbeta\cdot\nabla v+c\, v)
   +(v\,\bbeta,\,\nabla v)
\end{eqnarray*}
and that
\begin{eqnarray*}\label{eqn:dif:ubuw}
 (\nabla v,\,v\,\bbeta)  
 &=& \dfrac{1}{2}\,\|(\bbeta\cdot\bn)^{1/2}\,v\|^{2}_{0,\Gamma_{{+}}}
     -\dfrac{1}{2}\,\left(v,\,v\,\nabla\cdot\bbeta\right).
\end{eqnarray*}
Combining the above two equalities yields 
\begin{eqnarray}\label{eq:newI}
I = \Big(v,\,\epsilon^{1/2}\,\nabla\cdot\btau+\bbeta\cdot\nabla v+c\, v\Big)
 -(v,\,\epsilon^{1/2}\,\btau\cdot\bn)_{\Gamma_{{+}}}.
\end{eqnarray}

By the trace theorem and the Cauchy-Schwarz inequality, we have
\begin{eqnarray}\nonumber
 \| \btau\cdot\bn\|_{-1/2,\Gamma_{{+}}} 
 &\leq & C\,\Big(\|\btau\|+\|\nabla\cdot\btau\|\Big)\\[2mm]\nonumber
 &\leq&C\,\Big( G^{1/2}_{1}(\btau,\,v;\,0)
   +\|\epsilon^{1/2}\,\nabla v\|
 +\epsilon^{-1/2}\,\|\bbeta\cdot\nabla v\|
 +\epsilon^{-1/2}\, \|c\,v\|\Big)\\[2mm]\label{ineq:tracesig} 
 &\leq& C\,\epsilon^{-1/2}\Big(G^{1/2}_{1}(\btau,\,v;\,0)+\|\nabla v\| +\|v\|\Big).
\end{eqnarray}
Let $\alpha_i= 1$ for $i=2$ or $1/2$ for $i=3$.
Then it follows from (\ref{eq:newI}), the Cauchy-Schwarz inequality, 
the definition of the dual norm, and (\ref{ineq:tracesig}) that for $i=2$ and $3$
\begin{eqnarray}\label{ine:I}
  I 
 &\leq&  \|v\|\, \|\epsilon^{1/2}\,\nabla\cdot\btau+\bbeta\cdot\nabla v+c\, v\|
  +\|\epsilon^{1/2-\alpha_i}\,v\|_{1/2,\Gamma_{{+}}} \,
  \|\epsilon^{\alpha_i}\,\btau\cdot\bn\|_{-1/2,\Gamma_{{+}}}  \\[2mm]\nonumber
 &\leq& C\,\left(\|v\|
 +\|\epsilon^{\alpha_i}\,\btau\cdot\bn\|_{-1/2,\Gamma_{{+}}}\right)
 \,G^{1/2}_{i}(\btau,\,v;\,0)
  \\[2mm]\nonumber
&\leq& C\,G_{i}(\btau,\,v;\,0)
 +C\,\Big(\|\epsilon^{\alpha_i-1/2}\,\nabla v\| 
 +\|v\|\Big)G^{1/2}_{i}(\btau,\,v;\,0),
\end{eqnarray}
which, together with (\ref{ineq:ugradu}), implies
\begin{eqnarray}
\|\epsilon^{1/2}\,\nabla v\|^{2}+\alpha_{0}\,\|v\|^{2} 
 \leq C_i\,G_{i}(\btau,\,v;\,0)
\end{eqnarray}
with $C_2$ independent of $\epsilon$ and $C_3$ proportional to $\epsilon^{-1/2}$.
This completes the proof of (\ref{3.6}) and, hence, (\ref{coercivity}) for $i = 2$ and $3$.

The validity of (\ref{coercivity}) for $i = 1$ may be established in a similar fashion 
by noticing that 
the boundary term of $I$ in (\ref{def:I}) vanishes due to the boundary conditions.
This completes the proof of the theorem.
\end{proof}

\section{Mesh-dependent least-squares functionals}
\label{sec:mdn}
\setcounter{equation}{0}

For computational feasibility, in this section, we replace the $\frac{1}{2}$-norm in the 
least-squares functionals defined in (\ref{fnal:w}) and (\ref{fnal:normalw})  by mesh-dependent
$L^2$-norms. 
For the simplicity of presentation, assume that the domain $\Om$ is a convex polygon in the two dimensional plane. (The extension to the higher dimension is straightforward.) 
Let $\cT_h =\{K\}$ be a triangulation of $\Om$ with triangular elements $K$ of diameter less than or equal to $h$. Assume that the triangulation $\cT_{h}$ is regular and quasi-uniform (see \cite{Cia:78}). 

Denote by $\cE_h$ the set of all edges of the triangulation $\cT_h$. 
The least-squares functionals $G_2$ and $G_3$ defined in (\ref{fnal:w}) and (\ref{fnal:normalw})
are modified by the following computable least-squares functionals:
\begin{eqnarray} \label{func:dispw}
 G^{h}_{2}(\btau,\,v;\,f)
  &=& G_{1}(\btau,\,v;\,f)
 +\sum_{e\in\cE_h\cap\Gamma_{{+}}}h^{-1}_{e}\|\epsilon^{-1/2}\,v\|^{2}_{0,e} \\[2mm]
 \label{func:disw}
 \mbox{and}\quad G^{h}_{3}(\btau,\,v;\,f) 
  &=&G_{1}(\btau,\,v;\,f)+\sum_{e\in\cE_h\cap\Gamma_{{+}}}h^{-1}_{e}\|v\|^{2}_{0,e},
\end{eqnarray}
where $h_e$ denotes the diameter of the edge $e$.

For any triangle $K\in \cT_h$, let $P_k(K)$ be the space of
polynomials of degree less than or equal to $k$ on $K$ and denote the local
Raviart--Thomas space of index $k$ on $K$ by
\[
RT_k(K)=P_k(K)^2 +\left(\begin{array}{c}
x_1\\x_2\end{array}\right)P_k(K).
\]
Then the standard $\Hdiv$ conforming Raviart--Thomas space of index
$k$ \cite{rt} and the standard (conforming) continuous piecewise
polynomials of degree $k+1$ are defined, respectively, by
\begin{eqnarray}\label{S_h}
\qquad\quad \Sigma^k_h&\!\!=\!\!&\{\btau\in H(\divv;\Om)\,:\,
\btau|_K\in RT_k(K),\,\,\forall\,K\in\cT_h\},\\[2mm]
\label{V_h} \qquad\quad
V^{k+1}_h&\!\!=\!\!&\{v\in H^1(\Om)\,:\, v\in
P_{k+1}(K),\,\,\forall\,K\in\cT_h\}.
\end{eqnarray}
These spaces have the following approximation properties: let $k\ge 0$ be an integer,
and let $l\in (0,\,k+1]$:
\begin{equation}\label{app1}
\inf_{\btau\in\,\Sigma^k_h}\|\bsigma -\btau\|_{\Hdiv}
\leq C\,h^l\left(\|\bsigma\|_{l}
+\|\gradt\bsigma\|_l\right)
\end{equation}
for $\bsigma\in H^{l}(\Om)^{2} \cap H(\divv;\Om)$ with
$\gradt\bsigma \in H^{l}(\Om)$ and
\begin{equation}\label{app2}
\inf_{v\in V^{k+1}_h}\|u -v\|_1
\leq C\,h^l\,\|u\|_{l+1}
\end{equation}
for $u\in H^{l+1}(\Om)$. 
In the subsequent sections, based on the smoothness of $\bsigma$ and $u$, we will choose $k+1$ to be the smallest integer greater than or equal to $l$. Since the triangulation $\cT_{h}$ is regular, the following inverse inequalities hold for all $K\in\cT_h$:
\begin{eqnarray}\label{ieqn:inv:2s}
 \| \btau \|_{1,K} 
 &\leq& C\, h^{-1}_{K}\,\| \btau \|_{K},\quad\forall\,\,\btau\in RT_{k}(K)\\[2mm]
 \|v\|_{1,K} 
 &\leq& C\, h^{-1}_{K}\,\|v \|_{K},\quad\forall\,\, v\in P_{k}(K)
\end{eqnarray}
with positive constant $C$ independent of $h_{K}$.

Denote by $\cu_{i}^h$ the finite dimensional subspaces of $\cu_{i}$:
\begin{eqnarray}\label{def:disspace}
\cu_{i}^h = \big(\Sigma^{k}_{h}\times V^{k+1}_{h}\big) \cap \cu_{i}.
\end{eqnarray}
For any $(\btau,\,v)\in \cu_i^h$, define the following norms: 
 \begin{eqnarray*}
  M^h_{2}(\btau,\,v)
  &=& M_1(\btau,\,v)
  +\sum_{e\in\cE_h\cap\Gamma_{{+}}} h^{-1}_{e}\|\epsilon^{-1/2}\,v\|^{2}_{0,e}
 \\[2mm]
\mbox{and}\quad
 M^h_{3}(\btau,\,v)
 &=& M_1(\btau,\,v) 
 + \sum_{e\in\cE_h\cap\Gamma_{{+}}} h^{-1}_{e}\|v\|^{2}_{0,e}.
 \end{eqnarray*}
Below we establish the discrete version of Theorem~\ref{thm:2w}, i.e., the coercivity of the discrete functionals (\ref{func:dispw}) and (\ref{func:disw}) with respect to the norms defined
above.
For the consistence of notation, we also let $G^{h}_{1}=G_{1}$ and $M^{h}_{1}=M_{1}$.

\begin{thm}\label{thm:compnorm}
For all $(\btau,\,v)\in \cu^h_{i}$ with 
$i = 2$ and $3$, there exist positive constants $C_i$ independent of $\epsilon$ such that
\begin{eqnarray}\label{ineq:discoer}
 M^{h}_{i}(\btau,\,v)
  \leq C_i\,G^{h}_{i}(\btau,\,v;\,0).
\end{eqnarray}
\end{thm}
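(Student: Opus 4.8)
The plan is to follow the structure of the proof of Theorem~\ref{thm:2w}, exploiting the fact that $\cu^h_i \subset \cu_i$. In particular, the integration-by-parts identity (\ref{eq:newI}) and the inequality (\ref{ineq:ugradu}) were derived for arbitrary elements of $\cu_i$, and hence remain valid for all $(\btau,\,v)\in\cu^h_i$. Consequently, it again suffices to establish the discrete analogue of (\ref{3.6}), namely $\|v\|^2 + \|\epsilon^{1/2}\,\nabla v\|^2 \leq C_i\,G^h_i(\btau,\,v;\,0)$, since $\|\btau\|$ is controlled by (\ref{3.5a}) and the boundary terms appearing in $M^h_i$ coincide with those in $G^h_i$ and are thus bounded trivially. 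The only genuinely new ingredient is the treatment of the boundary contribution $(v,\,\epsilon^{1/2}\,\btau\cdot\bn)_{\Gamma_{{+}}}$ in (\ref{eq:newI}), which must now be estimated against the mesh-dependent boundary norms of (\ref{func:dispw}) and (\ref{func:disw}) in place of the fractional $1/2$-norm.

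The key step is a discrete trace estimate for this boundary term. Writing $\alpha_i=1$ for $i=2$ and $\alpha_i=1/2$ for $i=3$ as before, I would apply the Cauchy--Schwarz inequality edge by edge, inserting the mesh weights $h^{\pm 1/2}_e$, to obtain
\[
(v,\,\epsilon^{1/2}\,\btau\cdot\bn)_{\Gamma_{{+}}}
\leq \Big(\sum_{e\in\cE_h\cap\Gamma_{{+}}} h^{-1}_e\,\|\epsilon^{1/2-\alpha_i}\,v\|^2_{0,e}\Big)^{1/2}
\Big(\sum_{e\in\cE_h\cap\Gamma_{{+}}} h_e\,\|\epsilon^{\alpha_i}\,\btau\cdot\bn\|^2_{0,e}\Big)^{1/2},
\]
so that the first factor is precisely the square root of the boundary part of $G^h_i(\btau,\,v;\,0)$. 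For the second factor, the trace theorem combined with the inverse inequality (\ref{ieqn:inv:2s}) gives, for each boundary edge $e$ of an element $K$, the scaled estimate $h_e\,\|\btau\cdot\bn\|^2_{0,e}\leq C\,\|\btau\|^2_{0,K}$; summing over the boundary edges yields $\sum_{e} h_e\,\|\btau\cdot\bn\|^2_{0,e}\leq C\,\|\btau\|^2$. Since $\epsilon^{\alpha_i}\leq 1$, this produces the clean bound $(v,\,\epsilon^{1/2}\,\btau\cdot\bn)_{\Gamma_{{+}}}\leq C\,\epsilon^{\alpha_i}\,\|\btau\|\,G^{h,1/2}_i(\btau,\,v;\,0)$, in which no negative power of $\epsilon$ appears.

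It then remains to assemble the pieces as in the continuous argument. Bounding the volume part of $I$ by $\|v\|\,G^{h,1/2}_i$, using (\ref{3.5a}) together with $G_1\leq G^h_i$ to replace $\|\btau\|$ by $G^{h,1/2}_i+\|\epsilon^{1/2}\,\nabla v\|$, and feeding the result into (\ref{ineq:ugradu}), I would arrive at an inequality of the form $\|\epsilon^{1/2}\,\nabla v\|^2+\|v\|^2 \leq C\,\big(G^h_i + (\|v\|+\|\epsilon^{1/2}\,\nabla v\|)\,G^{h,1/2}_i\big)$. A Young's inequality then absorbs the $\|v\|^2$ and $\|\epsilon^{1/2}\,\nabla v\|^2$ contributions into the left-hand side, giving the discrete analogue of (\ref{3.6}) and hence (\ref{ineq:discoer}).

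The main obstacle is the boundary estimate in the second paragraph, and this is exactly where the mesh-dependent formulation improves on the continuous one. In Theorem~\ref{thm:2w} the dual-norm bound (\ref{ineq:tracesig}) carried a factor $\epsilon^{-1/2}$, which forced $C_3\sim\epsilon^{-1/2}$, whereas the discrete trace bound $h_e\,\|\btau\cdot\bn\|^2_{0,e}\leq C\,\|\btau\|^2_{0,K}$ is entirely $\epsilon$-free; this is what allows \emph{both} $C_2$ and $C_3$ to be independent of $\epsilon$ in the discrete setting. The point requiring care is that the regularity and quasi-uniformity of $\cT_h$ (so that $h_e$ is comparable to $h_K$) are what make the per-edge scaling uniform, and that the constant in the discrete trace inequality depends only on the shape-regularity of $\cT_h$ and the polynomial degree $k$.
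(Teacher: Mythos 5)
Your proposal is correct and follows essentially the same route as the paper's proof: the same reduction to the discrete analogue of (\ref{3.6}) via (\ref{ineq:ugradu}) and (\ref{eq:newI}), the same edge-by-edge Cauchy--Schwarz with weights $h_e^{\pm 1/2}$, and the same discrete trace-plus-inverse-inequality bound $h_e\,\|\btau\cdot\bn\|^2_{0,e}\leq C\,\|\btau\|^2_{0,K}$ (the paper's (\ref{4.13})), followed by absorption via Young's inequality. Your closing observation that this $\epsilon$-free trace bound is precisely what removes the $\epsilon^{-1/2}$ in $C_3$ matches the paper's remark after the theorem.
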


\begin{proof}
Similar to the argument in the proof of Theorem~\ref{thm:2w}, 
in order to establish (\ref{ineq:discoer}), it suffices to show that
\begin{eqnarray}\label{ineq:ugradu:dis}
\|\epsilon^{1/2}\,\nabla v\|^{2} + \| v\|^{2} \leq C\,G^{h}_{i}(\btau,\,v;\,0)
\end{eqnarray}
for all $(\btau,\,v)\in \cu^h_{i}$. Moreover, we have
\begin{eqnarray}\label{ineqn:ugraduI}
\|\epsilon^{1/2}\,\nabla v\|^{2} + \| v\|^{2} \leq C\,\left(G^{h}_{i}(\btau,\,v;\,0)+I\right)
\end{eqnarray}
with $I$ defined in (\ref{def:I}). 

For any $e\in \cE_h\cap \Gamma_+$, let $e$ be an edge of element $K\in\cT_h$.
It follows from the trace theorem and the inverse inequality in (\ref{ieqn:inv:2s}) that
  \[
 h_e\,\|\btau\cdot\bn\|^2_{0,e}
 \leq C\,h_e\, \|\btau\|^2_{0,e}
 \leq C\, h_e\,\|\btau\|_{0,K}\|\btau\|_{1,K}
 \leq C\, \|\btau\|_{0,K}^2,
  \]
which, together with (\ref{3.5a}), implies
 \beq\label{4.13}
 \left(\sum_{e\in \cE_h\cap \Gamma_+} h_e\,\|\btau\cdot\bn\|^2_{0,e}\right)^{1/2}
 \leq C\, \|\btau\|
  \leq C\, \left( G^{1/2}_1(\btau,\,v;\,0)  +\|\epsilon^{1/2}\,\nabla v\|\right).       
 \eeq

Let $\alpha_i=1$ for $i=2$ or $1/2$ for $i=3$.
It follows from (\ref{eq:newI}), the Cauchy-Schwarz inequality, and (\ref{4.13}) that
\begin{eqnarray*}
 I 
 &=& \Big(v,\,\epsilon^{1/2}\,\nabla\cdot\btau+\bbeta\cdot\nabla v+c\,v\Big)
 -(v,\,\epsilon^{1/2}\,\btau\cdot\bn)_{\Gamma_{{+}}}\\[2mm]\nonumber
  &\leq& C\, \left(\|v\|
  +\epsilon^{\alpha_i}\,
 \Big(\sum_{e\in\cE_h\cap\Gamma_{{+}}} 
   h_{e}\,\|\btau\cdot\bn\|^{2}_{0,e}\Big)^{1/2}\right)\,
  G^{h}_{i}(\btau,\,v;\,0)^{1/2}\\[2mm]\nonumber
 &\leq& C\, G^{h}_{i}(\btau,\,v;\,0)
 +C\, \left(\|v\|+\|\epsilon^{1/2}\nabla v\|\right) 
 \,G^{h}_{i}(\btau,\,v;\,0)^{1/2}
\end{eqnarray*}
which, together with (\ref{ineqn:ugraduI}), implies the validity of (\ref{ineq:ugradu:dis})
and, hence, (\ref{ineq:discoer}).
This completes the proof of the theorem.
\end{proof}

\begin{rem}
Note that the coercivity constant $C_3$ in the discrete version is no longer depending 
on $\epsilon$, that is better than the continuous version (see Theorem~\ref{thm:2w}).
\end{rem}

\section{Finite element approximations}
\label{sec:fea}
\setcounter{equation}{0}

The least-squares problems are to find $(\bsigma,\,u)\in \cu_{i}$ ($i = 1,\,2,\,3$) such that
\begin{eqnarray}
 G^{h}_{i}(\bsigma,\,u;\,f) = \min_{(\btau,\,v)\in\, \cu_{i}} G^{h}_{i}(\btau,\,v;\,f).
\end{eqnarray}
The corresponding variational problems are to find $(\bsigma,\,u) \in \cu_{i}$ such that
\begin{eqnarray}\label{eqn:vf}
 a_{i}(\bsigma,\,u;\,\btau,\,v) = \cf_{i}(\btau,\,v), \quad \forall\, (\btau,\,v)\in \cu_{i},
\end{eqnarray}
where the bilinear forms $a_{i}(\cdot\, ;\, \cdot)$ are symmetric and given by
\begin{eqnarray*}
a_{1}(\bsigma,\,u;\,\btau,\,v) &=& (\bsigma+\epsilon^{1/2}\,\nabla u,\,\btau+\epsilon^{1/2}\,\nabla v) \\[2mm] \nonumber
& & +(\epsilon^{1/2}\,\nabla\cdot\bsigma+\bbeta\cdot\nabla u+c\,u,\,\epsilon^{1/2}\,\nabla\cdot\btau+\bbeta\cdot\nabla v+c\,v),\\[2mm]
a_{2}(\bsigma,\,u;\,\btau,\,v) &=& a_{1}(\bsigma,\,u;\,\btau,\,v) +\sum_{e\,\in\cE_{h}\cap\Gamma_{{+}}}h^{-1}_{e}\,\epsilon^{-1}\,(u,\,v)_{0,e},\\[2mm]
a_{3}(\bsigma,\,u;\,\btau,\,v) &=& a_{1}(\bsigma,\,u;\,\btau,\,v) +\sum_{e\,\in\cE_{h}\cap\Gamma_{{+}}}h^{-1}_{e}\,(u,\,v)_{0,e},
\end{eqnarray*}
and the linear forms $\cf_{i}(\cdot)$ are given by
\begin{eqnarray*}\label{def:linfunc}
\cf_{i}(\btau,\,v) = (f,\,\epsilon^{1/2}\,\nabla\cdot\btau+\bbeta\cdot\nabla v+c\,v)\quad\,\mbox{for}\,\, i = 1,\,2,\,3.
\end{eqnarray*}

The least-squares finite element approximations to the variational problems in (\ref{eqn:vf}) are to find $(\bsigma^i_{h},\,u^i_{h}) \in \cu^{h}_{i}$ such that
\begin{eqnarray}\label{eqn:varprom}
a_{i}(\bsigma^i_{h},\,u^i_{h};\,\btau,\,v) = \cf_{i}(\btau,\,v), \quad \forall\,\, (\btau,\,v)\in \cu^{h}_{i},
\end{eqnarray}
for $i = 1,\,2,\,3$.
Taking the difference between (\ref{eqn:vf}) and (\ref{eqn:varprom}) implies the following orthogonality:
\begin{eqnarray}\label{eqn:ortho}
a_{i}(\bsigma - \bsigma^i_{h},\,u - u^i_{h};\,\btau,\,v) = 0,\quad \forall \,\, (\btau,\,v)\in \cu^{h}_{i}.
\end{eqnarray}

In the rest of this section, we consider a stronger norm which incorporates the norm of the streamline derivative:
\[
\vertiii{(\btau,\,v)}^{2}_{i} = M^{h}_{i}\,(\btau,\,v)+\sum_{K\in\cT_{h}}\delta_{K}\,\|\bbeta\cdot\nabla v\|^{2}_{K},
\]
where $\delta_{K}$ is a positive constant to be determined. 
In the following lemma, we show that $G^{h}_{i}(\bsigma,\,u;\,0)$ are also elliptic with respect to these norms 
if the $\delta_{K}$ is appropriately chosen.

\begin{lem}\label{lmm:disnorm:2w}
For all $K\in\cT_{h}$, assume that $0<\delta_{K} \leq \min\{h^{2}_{K}/\epsilon,\,C\}$, then there exist positive constants $C_{i}$ independent of $\epsilon$ such that
\begin{eqnarray}\nonumber
\vertiii{ (\btau,\,v)}^{2}_{i} \leq C_{i}\, G^{h}_{i}\,(\btau,\,v;\,0),\quad \forall\, (\btau,\,v)\in \cu^{h}_{i}, \quad i = 1,\,2,\,3.
\end{eqnarray}
\end{lem}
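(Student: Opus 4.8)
The plan is to reduce the statement to the discrete coercivity already available and then to absorb the single new ingredient, the streamline term $\sum_{K\in\cT_h}\delta_K\,\|\bbeta\cdot\nabla v\|^2_K$. Indeed, Theorem~\ref{thm:compnorm} gives $M^h_i(\btau,v)\le C_i\,G^h_i(\btau,v;0)$ for $i=2,3$ with $C_i$ independent of $\epsilon$, and the corresponding bound for $i=1$ follows from Theorem~\ref{thm:2w} since $M^h_1=M_1$ and $G^h_1=G_1$. Hence it suffices to prove $\sum_{K\in\cT_h}\delta_K\,\|\bbeta\cdot\nabla v\|^2_K\le C\,G^h_i(\btau,v;0)$ with $C$ independent of $\epsilon$.

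First I would extract $\bbeta\cdot\nabla v$ from the second residual of $G_1$. On each element $K$, the identity $\bbeta\cdot\nabla v=(\epsilon^{1/2}\,\nabla\cdot\btau+\bbeta\cdot\nabla v+c\,v)-\epsilon^{1/2}\,\nabla\cdot\btau-c\,v$ together with the triangle inequality gives $\delta_K\,\|\bbeta\cdot\nabla v\|^2_K\le 3\delta_K\big(\|\epsilon^{1/2}\,\nabla\cdot\btau+\bbeta\cdot\nabla v+c\,v\|^2_K+\epsilon\,\|\nabla\cdot\btau\|^2_K+\|c\,v\|^2_K\big)$. The residual piece and the reaction piece are routine: using $\delta_K\le C$, the former sums to $C\,G_1(\btau,v;0)\le C\,G^h_i(\btau,v;0)$, while the assumption $\|c\|_\infty\le\gamma$ gives $\delta_K\,\|c\,v\|^2_K\le C\,\gamma^2\,\|v\|^2_K$, which sums to $C\,\|v\|^2\le C\,M^h_i(\btau,v)\le C\,G^h_i(\btau,v;0)$.

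The main obstacle, and the reason for the hypothesis $\delta_K\le\min\{h^2_K/\epsilon,\,C\}$, is the divergence piece $\delta_K\,\epsilon\,\|\nabla\cdot\btau\|^2_K$: a crude bound would leave uncontrolled negative powers of $h_K$ and $\epsilon$. Here I would invoke the inverse inequality (\ref{ieqn:inv:2s}), valid because $\btau|_K\in RT_k(K)$, to write $\|\nabla\cdot\btau\|_K\le\|\btau\|_{1,K}\le C\,h^{-1}_K\,\|\btau\|_K$. Combined with $\delta_K\,\epsilon\le h^2_K$, this produces the crucial cancellation $\delta_K\,\epsilon\,\|\nabla\cdot\btau\|^2_K\le h^2_K\cdot C\,h^{-2}_K\,\|\btau\|^2_K=C\,\|\btau\|^2_K$, in which every power of $h_K$ and of $\epsilon$ disappears. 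Summation over $K$, the triangle inequality (\ref{3.5a}), and the already-established bound $\|\epsilon^{1/2}\,\nabla v\|^2\le C\,G^h_i(\btau,v;0)$ from (\ref{ineq:ugradu:dis}) then yield $\sum_{K\in\cT_h}\delta_K\,\epsilon\,\|\nabla\cdot\btau\|^2_K\le C\,\|\btau\|^2\le C\,G^h_i(\btau,v;0)$.

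Collecting the three pieces bounds the streamline term by $C\,G^h_i(\btau,v;0)$, and adding the coercivity estimate for $M^h_i$ completes the proof. I expect the only delicate point to be the bookkeeping of $\epsilon$ in the divergence piece; everything else is a triangle-inequality-and-summation argument in the spirit of the proof of Theorem~\ref{thm:compnorm}.
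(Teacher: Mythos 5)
Your proposal is correct and follows essentially the same route as the paper's proof: reduce to the coercivity in Theorems~\ref{thm:2w} and \ref{thm:compnorm}, split $\bbeta\cdot\nabla v$ into the residual, divergence, and reaction pieces, and kill the divergence piece via the inverse inequality (\ref{ieqn:inv:2s}) together with $\delta_K\,\epsilon\le h_K^2$. The only cosmetic difference is that you bound $\|\btau\|^2$ through (\ref{3.5a}) and (\ref{ineq:ugradu:dis}) instead of reading it off directly from $\|\btau\|^2\le M^h_i(\btau,v)\le C_i\,G^h_i(\btau,v;0)$, which is immaterial.
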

\begin{proof}
By Theorems~\ref{thm:2w} and \ref{thm:compnorm}, to prove the validity of the lemma, it suffices to show that 
\begin{eqnarray}\label{ineq:dirnorm}
\sum_{K\in\cT_{h}}\delta_{K}\,\|\bbeta\cdot\nabla v\|^{2}_{K} \leq C_{i}\, G^{h}_{i}\,(\btau,\,v;\,0).
\end{eqnarray}
To this end, note the facts that 
 \[
   \delta_K\leq C
  \quad\mbox{and}\quad
  \dfrac{\delta_{K}\,\epsilon }{h^{2}_{K}} 
  \leq \min\left\{1,\, \dfrac{C\,\epsilon}{h^{2}_{K}}\right\} \leq C.
\]
Now it follows from the Cauchy-Schwarz inequality and the inverse inequality in (\ref{ieqn:inv:2s}) that
 \begin{eqnarray*}\nonumber
 \sum_{K\in\cT_{h}}\delta_{K}\,\|\bbeta\cdot\nabla v\|^{2}_{K}
 & \leq & C\, \sum_{K\in\cT_{h}} \delta_{K}\,\left(G^{h}_{1,K}\,(\btau,\,v;\,0) + \|\epsilon^{1/2}\,\nabla\cdot\btau\|^{2}_{K}+\|c\,v\|^{2}_{K}\right)\\[2mm]\nonumber
 &\leq& C\, \sum_{K\in\cT_{h}} \left(G^{h}_{1, K}\,(\btau,\,v;\,0) + \dfrac{\delta_K\,\epsilon}{h^{2}_{K}}\,\|\btau\|^{2}_{K}+\|v\|^{2}_{K}\right)\\[2mm]\nonumber
&\leq& C\,\left(G^{h}_{1}\,(\btau,\,v;\,0)+\|\btau\|^2 + \|v\|^2 \right)    
\leq C\,G^{h}_{i}\,(\btau,\,v;\,0),
\end{eqnarray*}
which establishes (\ref{ineq:dirnorm}) and hence completes the proof of the lemma.
\end{proof}

To choose $\delta_K$ properly, first define 
the local mesh P\'eclet number by
\[
{P\!e}_{K} = \frac{\|\bbeta\|_{0,\infty,K}\, h_{K}}{2\,\epsilon},
\]
then partition the triangulation $\cT_{h}$ into two subsets:
\begin{eqnarray}\label{ntion:condodiffco}
  \cT^{c}_{h} = \{K\in\cT_{h}\,:\, {P\!e}_{K} > 1\}
  \quad\text{and}\quad
  \cT^{d}_{h} = \{K\in\cT_{h}\,:\,{P\!e}_{K} \leq 1\}.
\end{eqnarray}
The elements in $\cT^{c}_{h}$ are referred to the convection-dominated elements, while the elements in $\cT^{d}_{h}$ the diffusion-dominated elements.
Now, the $\delta_{K}$ is chosen to be
\begin{eqnarray}\label{def:deltaK}
\delta_{K} = \left\{
\begin{array}{ll}
\dfrac{2\,h_{K}}{\|\bbeta\|_{0,\infty,K}}, & \text{if}\,\, K \in \cT^{c}_{h},\\[8mm]
\dfrac{h^{2}_{K}}{\epsilon}, & \text{if}\,\,K \in \cT^{d}_{h}.
\end{array}
\right.
\end{eqnarray}

\begin{rem}
The $\delta_{K}$ defined in {\em (\ref{def:deltaK})} satisfies the assumption in 
{\em Lemma~\ref{lmm:disnorm:2w}}, i.e., 
 \begin{equation}\label{Delta}
 \delta_K \le \min\{h^{2}_{K}/\epsilon,\,C\}.
 \end{equation}
\end{rem}

\begin{proof}
Since $\|\bbeta\|_{0,\infty,K}$ is large comparing to $h_K$, we have 
 \begin{equation}\label{C}
  \frac{2\,h_{K}}{\|\bbeta\|_{0,\infty,K}} 
   \leq C.
   \end{equation}
For any $K\in\cT^{c}_{h}$, the fact that ${P\!e}_{K} > 1$ implies 
 \[
  \frac{2\,h_{K}}{\|\bbeta\|_{0,\infty,K}} 
   < \dfrac{h^2_K}{\epsilon},
   \]
   which, together with (\ref{C}), yields (\ref{Delta}).
For any $K\in\cT^{d}_{h}$, (\ref{Delta}) is again a consequence of the definition of $\delta_K$ in 
(\ref{def:deltaK}), the fact that ${P\!e}_{K} \leq 1$, and (\ref{C}).
\end{proof}

Denote by $\cT^{\partial}_{h}$ the set of elements that intersect the outflow boundary nontrivially, i.e.,
\[
\cT^{\partial}_{h} = \{K\in\cT_{h}\,:\, meas(\bar{K}\cap \Gamma^{+}) > 0\}.
\]
In this paper, we assume that
\begin{eqnarray}\label{ass:elebdry}
\cT^{\partial}_{h} \subset \cT^{d}_{h}.
\end{eqnarray}
For any $K\in \cT^{d}_{h}$, the fact that ${P\!e}_{K} \leq 1$ implies
 \[
  h_K < \dfrac{2\, \epsilon}{\|\bbeta\|_{0,\infty,K}}.
  \]
Hence, assumption (\ref{ass:elebdry}) means that the mesh size in the boundary layer region 
is comparable to the perturbation parameter $\epsilon$. 

\begin{thm}\label{thm:apierr:w}
Let $(\bsigma,\,u)$ be the solution of {\em (\ref{eqn:vf})}.
Assume that $(\bsigma,\,u) \in H^{l}(\Om)^{2}\times H^{l+1}(\Om)$ and that $\nabla\cdot\bsigma\in H^{l}(\Om)$.
Let $(\bsigma^i_{h},\,u^i_{h})$,  $i=1,\,2,\,3$, be the solution of {\em (\ref{eqn:varprom})} with $k=l$. 
Under the assumption in {\em (\ref{ass:elebdry})}, we have the following a priori error estimation:
 \begin{eqnarray} \nonumber
 && C_{i}\,\vertiii{(\bsigma-\bsigma^i_h,\,u-u^i_h)}^{2}_{i} \\ [2mm] \nonumber
 &\leq& \sum_{K\in\cT^{c}_{h}}  h^{2l-1}_{K} \, \left(\epsilon \,\|\nabla\cdot\bsigma\|^{2}_{l,K}+h_{K}\,
    \|\bsigma\|^{2}_{l,K}+ \|u\|^{2}_{l+1,K}\right)\\[2mm] \label{thm:5.3}
 &&+\!\!\sum_{K\in\cT^{d}_{h}} h^{2l-1}_{K}\, \left( \dfrac{ \epsilon^{2}} {h_{K}}\,
   \|\nabla\cdot\bsigma\|^{2}_{l,K}
    +h_{K}\,\|\bsigma\|^{2}_{l,K}+   \dfrac{ \epsilon} {h_{K}}\, \|u\|^{2}_{l+1,K}\right),
 \end{eqnarray}
where constants $C_{i}>0$ are independent of $\epsilon$.
\end{thm}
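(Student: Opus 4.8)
The plan is to follow the standard least-squares route: bound the error in the strong triple norm $\vertiii{\cdot}_i$ by the least-squares functional of a judiciously chosen interpolation error, which the approximation properties (\ref{app1})--(\ref{app2}) then control element by element. The one point of care is that the total error $(\bsigma-\bsigma^i_h,\,u-u^i_h)$ does \emph{not} lie in $\cu^h_i$, so Lemma~\ref{lmm:disnorm:2w} cannot be applied to it directly.

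First I would introduce interpolants $(\btau_I,\,v_I)\in\cu^h_i$ of $(\bsigma,\,u)$ realizing the rates in (\ref{app1})--(\ref{app2}); since $u$ vanishes on $\partial\Om$ (hence on $\Gamma_{-}$), the nodal interpolant $v_I$ respects the essential boundary condition defining $\cu_i$, so $(\btau_I,\,v_I)$ is admissible. Split the error as
\[
(\bsigma-\bsigma^i_h,\,u-u^i_h)=\brr+\bphi,\qquad
\brr=(\bsigma-\btau_I,\,u-v_I),\quad
\bphi=(\btau_I-\bsigma^i_h,\,v_I-u^i_h)\in\cu^h_i,
\]
and use $\vertiii{(\bsigma-\bsigma^i_h,\,u-u^i_h)}_i\le\vertiii{\brr}_i+\vertiii{\bphi}_i$. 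For the discrete part $\bphi$, Lemma~\ref{lmm:disnorm:2w} gives $\vertiii{\bphi}^2_i\le C_i\,G^h_i(\bphi;0)=C_i\,a_i(\bphi;\bphi)$. Since $\bphi=(\bsigma-\bsigma^i_h,\,u-u^i_h)-\brr$ and $\bphi\in\cu^h_i$, the orthogonality (\ref{eqn:ortho}) annihilates the total-error contribution, leaving $a_i(\bphi;\bphi)=-a_i(\brr;\bphi)$. As $a_i(\cdot;\cdot)$ is the symmetric bilinear form whose quadratic form is $G^h_i(\cdot;0)$, the Cauchy--Schwarz inequality $a_i(\brr;\bphi)\le G^h_i(\brr;0)^{1/2}G^h_i(\bphi;0)^{1/2}$ yields the best-approximation bound $G^h_i(\bphi;0)\le G^h_i(\brr;0)$, hence $\vertiii{\bphi}^2_i\le C_i\,G^h_i(\brr;0)$. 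Thus the whole proof reduces to estimating the two \emph{interpolation-error} quantities $\vertiii{\brr}^2_i$ and $G^h_i(\brr;0)$ by the right-hand side of (\ref{thm:5.3}).

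Both quantities I would expand into $L^2$-residuals and bound termwise on each $K$, using the triangle inequality together with (\ref{app1})--(\ref{app2}): for instance $\|\bsigma-\btau_I\|^2_K\lesssim h^{2l}_K\|\bsigma\|^2_{l,K}$, $\|\nabla\cdot(\bsigma-\btau_I)\|^2_K\lesssim h^{2l}_K\|\nabla\cdot\bsigma\|^2_{l,K}$, $\|\nabla(u-v_I)\|^2_K\lesssim h^{2l}_K\|u\|^2_{l+1,K}$, and $\|u-v_I\|^2_K\lesssim h^{2l+2}_K\|u\|^2_{l+1,K}$. The outflow functional is handled by a scaled trace inequality, $h^{-1}_e\|u-v_I\|^2_{0,e}\lesssim h^{2l}_K\|u\|^2_{l+1,K}$, and the streamline term in $\vertiii{\brr}^2_i$ by $\delta_K\|\bbeta\cdot\nabla(u-v_I)\|^2_K\lesssim\delta_K\,h^{2l}_K\|u\|^2_{l+1,K}$ with $\delta_K$ from (\ref{def:deltaK}). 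The remaining work is algebraic: insert the weights $\epsilon$, $\delta_K$, $h^{-1}_e\epsilon^{-1}$ and collapse each contribution into the form $h^{2l-1}_K(\cdots)$ prescribed in (\ref{thm:5.3}), splitting the sum over $\cT^c_h$ and $\cT^d_h$ as in (\ref{ntion:condodiffco}).

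The hard part will be the $\epsilon$-bookkeeping that keeps $C_i$ independent of $\epsilon$. On the diffusion-dominated elements one has ${P\!e}_K\le 1$, i.e. $h_K\lesssim\epsilon$ and a fortiori $h^2_K\lesssim\epsilon$; these are exactly the inequalities needed to convert the crude bounds $\epsilon\,h^{2l}_K\|\nabla\cdot\bsigma\|^2_{l,K}$ and $h^{2l}_K\|u\|^2_{l+1,K}$ into the sharper weights $\epsilon^2 h^{2l-2}_K\|\nabla\cdot\bsigma\|^2_{l,K}$ and $\epsilon\,h^{2l-2}_K\|u\|^2_{l+1,K}$ appearing in the $\cT^d_h$ sum. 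The most delicate term is the $\epsilon^{-1}$-weighted outflow functional of $G^h_2$: here $h^{-1}_e\epsilon^{-1}\|u-v_I\|^2_{0,e}\lesssim\epsilon^{-1}h^{2l}_K\|u\|^2_{l+1,K}$, and absorbing the $\epsilon^{-1}$ requires the \emph{linear} bound $h_K\lesssim\epsilon$, not merely $h^2_K\lesssim\epsilon$. This is precisely why assumption (\ref{ass:elebdry}) --- that every element meeting $\Gamma_{+}$ is diffusion-dominated --- is imposed, and it is the structural hypothesis that renders all three constants $C_i$ uniform in $\epsilon$.
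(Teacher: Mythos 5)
Your proposal is correct, but it departs from the paper's own proof at the decisive step, and the difference is worth spelling out. Both arguments share the same skeleton: split the error into an interpolation part $\brr=(\bsigma-\btau_I,\,u-v_I)$ and a discrete part $\bphi=(\btau_I-\bsigma^i_h,\,v_I-u^i_h)\in\cu^h_i$, apply the triangle inequality, invoke the coercivity of Lemma~\ref{lmm:disnorm:2w} and the orthogonality (\ref{eqn:ortho}), and finish with the approximation properties (\ref{app1})--(\ref{app2}) plus the scaled trace inequality and the P\'eclet-number dichotomy. Where you diverge is in how $a_i(\bphi;\bphi)=-a_i(\brr;\bphi)$ is estimated. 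You apply Cauchy--Schwarz for the symmetric positive semi-definite form $a_i$ at the functional level, obtaining the C\'ea-type best-approximation bound $G^h_i(\bphi;0)\le G^h_i(\brr;0)$, so that only pure interpolation quantities remain. The paper instead expands $a_i(\bE^i_h,e^i_h;-\bE_I,-e_I)$ into four cross terms $I^i_1,\dots,I^i_4$ and estimates each one separately, using two tools you never need: the commuting property (\ref{commut}) of the Raviart--Thomas interpolant (which annihilates the div--div cross term in $I^i_2$) together with the inverse inequality (\ref{ieqn:inv:2s}), and a $\delta_K^{\pm 1/2}$-weighted Cauchy--Schwarz pairing of $\bbeta\cdot\nabla e^i_h$ against the interpolation error in $I^i_3$. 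That weighted pairing is precisely what generates the $\delta_K^{-1}$ factors, and via (\ref{def:deltaK}) the $h^{2l-1}_K$ and $\epsilon\,h^{2l-2}_K$ weights appearing in (\ref{thm:5.3}). Your route avoids all of this machinery and in fact delivers a stronger conclusion: every term comes out with weight $h^{2l}_K$ (times $\epsilon$ or $\epsilon^{\alpha_i}$ as appropriate), with no $\delta_K^{-1}$ loss, and the stated right-hand side of (\ref{thm:5.3}) is then recovered simply because $h^{2l}_K\le h^{2l-1}_K$ on $\cT^c_h$ and $h^2_K\lesssim\epsilon$ on $\cT^d_h$. In other words, your argument shows that the half-order loss in the theorem (which the paper attributes to the mismatch between the coercivity and continuity norms) is partly an artifact of the paper's term-by-term technique.

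Two small points of hygiene. First, your phrase ``convert the crude bounds \dots into the sharper weights'' has the comparison backwards: the theorem's $\cT^d_h$ weights $\epsilon^2h^{2l-2}_K$ and $\epsilon\,h^{2l-2}_K$ are \emph{coarser} than your bounds $\epsilon\,h^{2l}_K$ and $h^{2l}_K$; the inequality you actually use, and correctly so, is that your bounds are dominated by the theorem's via $h^2_K\lesssim\epsilon$. Second, your elementwise estimates such as $\|\nabla\cdot(\bsigma-\btau_I)\|_K\lesssim h^l_K\|\nabla\cdot\bsigma\|_{l,K}$ require the canonical Raviart--Thomas and nodal interpolants rather than the global infima in (\ref{app1})--(\ref{app2}); this is the same choice the paper makes (it is what guarantees (\ref{commut})), so it should be stated as part of the construction of $(\btau_I,v_I)$.
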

\begin{proof}
We provide proof of (\ref{thm:5.3}) only for $i=2$ and $3$ since (\ref{thm:5.3}) may be obtained in a
similar fashion.

To this end, let $\bsigma_{I}$ and $u_{I}$ be the interpolants of $\bsigma$ and $u$, respectively, such that
the approximation properties in (\ref{app1}) and (\ref{app2}) hold and that
 \begin{equation}\label{commut}
 \left(\nabla\cdot (\bsigma -\bsigma_I),\, v\right) = 0,
 \quad\forall\,\, v\in D^h_{k},
 \end{equation}
where $D^h_k =\{ v\in L^2(\Omega)\, : \, v|_K\in P_k(K)\,\,\forall\, K\in \cT_h\}$ is the space of discontinuous 
piecewise polynomials of degree less than or equal to $k\ge 0$.
Let
 \begin{eqnarray*}
 \bE_I=\bsigma-\bsigma_I,\quad
 \bE^i_h=\bsigma_I-\bsigma^i_h, \quad
 e_I=u-u_I,\quad\mbox{and}\quad
 e^i_h=u_I-u^i_h.
 \end{eqnarray*}
Since $\bE^i= \bsigma-\bsigma^i_h = \bE_I+\bE^i_h$ and $e^i = u-u^i_h = e_I+e^i_h$,
 the triangle inequality gives
 \begin{eqnarray}\label{ineq:errtri}
 \vertiii{(\bE^i,\, e^i)}_{i} \leq \vertiii{(\bE_{I}, \,e_{I})}_{i} + \vertiii{(\bE^i_{h}, \,e^i_{h})}_{i}.
 \end{eqnarray}
 Let $\alpha_i=-1$ or $0$ for $i=2, \, 3$. By approximation property (\ref{app2})
 and assumption (\ref{ass:elebdry}), we have
 \[
 \sum_{e\,\in\cE_{h}\cap\Gamma_{{+}}}
  h^{-1}_{e}\,\epsilon^{\alpha_i}\,\|e_{I}\|^{2}_{0,e} 
 \leq C\,\sum_{K\in\cT^{\partial}_{h}}
  h^{2l}_{K}\,\epsilon^{\alpha_i}\,\|u\|^{2}_{l+1,K}
 \leq C\,\sum_{K\in\cT^{\partial}_{h}}h^{2l+\alpha_{i}}_{K}\,\|u\|^{2}_{l+1,K}. 
 \]
Now, it follows from (\ref{app1}), (\ref{app2}), the trace theorem, 
and the fact $\delta_{K} \le C$ that
 \begin{eqnarray}\nonumber
 & &\vertiii{(\bE_{I},\,e_{I})}_{i}^2\\[2mm]\nonumber
 &\leq&C\,\left( \|\bE_{I}\|^2+\|e_{I}\|^2
 +\|\epsilon^{1/2}\,\nabla e_{I}\|^2
 +\sum_{e\in\Gamma_{{+}}}h^{-1}_{e}\,\epsilon^{\alpha_{i}}\,\|e_{I}\|^2_{e}
 +\sum_{K\in\cT_{h}}\|\bbeta\cdot\nabla e_{I}\|^{2}_{K} \right)\\[2mm]\label{ineqn:intererr:23w}
 &\leq& C\,\left(\sum_{K\in\cT_{h}} h^{2l}_K\,\|\bsigma\|^2_{l,K}
  +\sum_{K\in\cT_{h}}h^{2l}_K\,\|u\|^2_{l+1,K}+\sum_{K\in\cT^{\partial}_{h}}h^{2l+\alpha_{i}}_{K}\,\|u\|^{2}_{l+1,K}\right).
\end{eqnarray}

To bound the second term of the right-hand side in (\ref{ineq:errtri}), by Lemma~\ref{lmm:disnorm:2w}
and orthogonality (\ref{eqn:ortho}), we have
\begin{eqnarray}\label{ieqn:diserr:w}
 C_{i}\,\vertiii{(\bE^{i}_{h},\, e^{i}_{h})}^{2}_{i}
 \leq a_{i}(\bE^{i}_{h},\,e^{i}_{h};\,\bE^{i}_{h},\,e^{i}_{h}) = a_{i}(\bE^{i}_{h},\,e^{i}_{h};\,-\bE_{I},\,-e_{I})
 \equiv I^{i}_{1}+I^{i}_{2}+I^{i}_{3}+I^{i}_{4}, \,\,\,\,
\end{eqnarray}
where
 \begin{eqnarray}\nonumber
  I^{i}_{1} 
 &=& (c\,e^{i}_{h},\,-\epsilon^{1/2}\,\nabla\cdot\bE_{I}-\bbeta\cdot\nabla e_{I}-c\,e_{I})
   +(\bE^{i}_{h}+\epsilon^{1/2}\,\nabla e^{i}_{h},\, -\bE_{I}-\epsilon^{1/2}\,\nabla e_{I}),\\[2mm]\nonumber
 I^{i}_{2} 
 &=& (\epsilon^{1/2}\,\nabla\cdot\bE^{i}_{h},\,-\epsilon^{1/2}\,\nabla\cdot\bE_{I}-\bbeta\cdot\nabla e_{I}-c\,e_{I}),\\[2mm]\nonumber
 I^{i}_{3} 
 &=& (\bbeta\cdot\nabla e^{i}_{h},\,-\epsilon^{1/2}\,\nabla\cdot\bE_{I}-\bbeta\cdot\nabla e_{I}-c\,e_{I}),\\[2mm]\nonumber
 \mbox{and}\quad 
 I^{i}_{4} 
 &=& \sum_{e\,\in\cE_{h}\cap\Gamma_{{+}}}h^{-1}_{e}\,\epsilon^{\alpha_{i}}\,(e^{i}_{h},\,-e_{I})_{0,e}.
\end{eqnarray}
It follows from the triangle and Cauchy-Schwarz inequalities,  (\ref{app1}), 
and (\ref{app2}) that
\begin{eqnarray}\nonumber\label{ineqn:I1:w}
  && I^{i}_{1}  \\[2mm]\nonumber
 &\leq& \!\!\!
  C\, \|e^{i}_{h}\|
 \left(\|\epsilon^{1/2}\nabla\cdot\bE_{I}\|+\|\nabla e_{I}\|+\|e_{I}\|\right)
 +C\,  \left(\|\bE^{i}_{h}\|+\|\epsilon^{1/2}\nabla e^{i}_{h}\|\right)
   \left(\|\bE_{I}\|+\|\epsilon^{1/2}\nabla e_{I})\|\right)\\[2mm]
 &\leq& \!\!\!\!\!
  C \left(\|e^{i}_{h}\|+\|\bE^{i}_{h}\|+\|\epsilon^{1/2}\nabla e^{i}_{h}\|\right)
 \left(\sum_{K\in\cT_{h}}h^{2l}_{K}\Big(\epsilon\|\nabla\cdot\bsigma\|^{2}_{l,K}
   +\|\bsigma\|^{2}_{l,K}+\|u\|^{2}_{l+1,K} \Big)  \!\!\right)^{1/2}. \quad\,\,
\end{eqnarray}
By (\ref{commut}), the Cauchy-Schwarz and triangle inequalities, and the
inverse inequality in (\ref{ieqn:inv:2s}), we have
\begin{eqnarray}\nonumber
  && I^{i}_{2} 
  = - (\epsilon^{1/2}\,\nabla\cdot\bE^{i}_{h},\,   
   \bbeta\cdot\nabla e_{I} + c\,e_{I}),\\[2mm] \label{ineqn:I2:w}
  &\leq & \!\!\!\! C \sum_{K\in\cT_{h}}\frac{\epsilon^{1/2}}{h_{K}}
        \|\bE^{i}_{h}\|_{K}\Big(\|\nabla e_{I}\|_{K}+\|e_{I}\|_{K}\Big)
  \leq C\, \|\bE^{i}_{h}\|\left(\sum_{K\in\cT_{h}}\epsilon\, h_{K}^{2l-2}\|u\|^{2}_{l+1,K}\right)^{1/2}. \quad \qquad 
\end{eqnarray}
By the Cauchy-Schwarz and the triangle inequalities, $I_{3}$ is bounded by
\begin{eqnarray} \nonumber
  &&  I^{i}_{3} %
 \leq C\sum_{K\in\cT_{h}} \|\bbeta\cdot\nabla e^{i}_{h}\|_{K}
 \left(\epsilon^{1/2}\,\|\nabla\cdot\bE_{I}\|_{K}
  +\|\nabla e_{I}\|_{K}+\|e_{I}\|_{K}\right)\\[2mm]\nonumber
 &\leq&C\sum_{K\in\cT_{h}} \|\bbeta\cdot\nabla e^{i}_{h}\|_{K}
 \left(\epsilon^{1/2}\,h^{l}_{K}\,\|\nabla\cdot\bsigma\|_{l,K}
 +h^{l}_{K}\,\|u\|_{l+1,K}\right)\\[2mm] \label{ineqn:I3:w}
 &\leq& \!\!\!  C\left(\sum_{K\in\cT_{h}} 
  \delta_{K}\|\bbeta\cdot\nabla e^{i}_{h}\|^{2}_{K}\right)^{1/2} \!\!
 \left(\sum_{K\in\cT_{h}}\delta^{-1}_{K}\,
 \left(\epsilon\, h^{2l}_{K}\,\|\nabla\cdot\bsigma\|^{2}_{l,K}
 +h^{2l}_{K}\,\|u\|^{2}_{l+1,K}\right)  \!\! \right)^{1/2}. \quad \quad
\end{eqnarray}
For $I^{i}_{4}$, it follows from the Cauchy-Schwarz inequality and the trace theorem that
\begin{eqnarray}\nonumber
 I^{i}_{4} %
 &\leq& C\,\left(\sum_{e\,\in\cE_{h}\cap\Gamma_{{+}}} h^{-1}_{e}\,
 \epsilon^{\alpha_{i}}\,\|e^{i}_{h}\|^{2}_{0,e}\right)^{1/2}
 \left(\sum_{e\,\in\cE_{h}\cap\Gamma_{{+}}}h^{-1}_{e}\,
 \epsilon^{\alpha_{i}}\,\|e_{I}\|^{2}_{0,e}\right)^{1/2}\\[2mm] \label{ineqn:I4:w}
 &\leq& C\,\left(\sum_{e\,\in\cE_{h}\cap\Gamma_{{+}}}h^{-1}_{e}\,
 \epsilon^{\alpha_{i}}\,\|e^{i}_{h}\|^{2}_{0,e}\right)^{1/2}
  \left(\sum_{K\in\cT^{\partial}_{h}}h_{K}^{2l+\alpha_{i}}\,\|u\|^{2}_{l+1,K}\right)^{1/2}.
\end{eqnarray}
Combining (\ref{ieqn:diserr:w}), (\ref{ineqn:I1:w}), (\ref{ineqn:I2:w}), (\ref{ineqn:I3:w}), (\ref{ineqn:I4:w}), and (\ref{Delta}),
 we have
\begin{eqnarray*}
 & &C_{i}\,\vertiii{(\bE^{i}_{h},\, e^{i}_{h})}^{2}_{i}\\[2mm]
 &\leq & \sum_{K\in\cT_{h}}h^{2l}_{K}\|\bsigma\|^{2}_{l,K} 
 + \sum_{K\in\cT_{h}} \left(1+\delta^{-1}_{K}\right)\,\epsilon\, h^{2l}_{K}\,\|\nabla\cdot\bsigma\|^{2}_{l,K} + \sum_{K\in\cT^{\partial}_{h}}h_{K}^{2l+\alpha_{i}}\,\|u\|^{2}_{l+1,K}\\[2mm]
 & &+ \sum_{K\in\cT_{h}} \left(1 + \epsilon\, h^{-2}_{K}+\delta_K^{-1}\right) h_{K}^{2l}\|u\|^{2}_{l+1,K}\\[2mm]
 &\leq& \sum_{K\in\cT_{h}}
 \left(\frac{\epsilon\, h^{2l}_{K}}{\delta_{K}}\,\|\nabla\cdot\bsigma\|^{2}_{l,K}+h^{2l}_{K}\,\|\bsigma\|^{2}_{l,K}
 +\frac{h^{2l}_{K}}{\delta_{K}}\,\|u\|^{2}_{l+1,K}\right)\\[2mm]
 & &+\sum_{K\in\cT^{\partial}_{h}}h_{K}^{2l+\alpha_{i}}\,\|u\|^{2}_{l+1,K},
\end{eqnarray*}
which, together with the definition of $\delta_{K}$ in (\ref{def:deltaK}), implies
\begin{eqnarray*}\label{ineq:uhuI2}
   C_{i}\,\vertiii{(\bE^{i}_{h},\, e^{i}_{h})}^{2}_{i} 
  &\leq& \sum_{K\in\cT^{c}_{h}}  h^{2l-1}_{K}
    \Big(\epsilon\,\|\nabla\cdot\bsigma\|^{2}_{l,K}+h_{K}\,\|\bsigma\|^{2}_{l,K}+  \|u\|^{2}_{l+1,K}\Big)\\[2mm]\nonumber
& &+\sum_{K\in\cT^{d}_{h}}   h^{2l-1}_{K}
  \left( \dfrac{ \epsilon^{2} } {h_{K}} \,\|\nabla\cdot\bsigma\|^{2}_{l,K}
   +h_{K}\,\|\bsigma\|^{2}_{l,K}
   + \dfrac{ \epsilon } {h_{K}}  \,\|u\|^{2}_{l+1,K}\right).
\end{eqnarray*}
Now, (\ref{thm:5.3}) is a consequence of 
 (\ref{ineq:errtri}) and (\ref{ineqn:intererr:23w}). This completes the proof of the theorem. 
\end{proof}

Note that the a priori error estimate in Theorem~\ref{thm:apierr:w} is not optimal. 
This is because the coercivity of the homogeneous least-squares functionals in Lemma~\ref{lmm:disnorm:2w}
are established in a norm that is weaker than the norm used for the continuity of the functionals. 
To restore the full order of convergence, one may use
piecewise polynomials of degree $l+1$ to approximate $u$. 

\begin{thm}\label{thm:apierr:w2}
Let $(\bsigma^i_{h},u^i_{h})$,  $i=1,\,2,\,3$, be the solution of {\em (\ref{eqn:varprom})} with $\cu^h_i=(\Sigma^{l}_h\times V^{l+1}_h)
\cap\, \cu_i$. 
Under the assumption of {\em Theorem~\ref{thm:apierr:w}}, we have the following a priori error estimation:
 \begin{eqnarray} \nonumber
 && C_{i}\,\vertiii{(\bsigma-\bsigma^i_h,\,u-u^i_h)}^{2}_{i} \\ [2mm] \nonumber
 &\leq& \sum_{K\in\cT^{c}_{h}}  h^{2l}_{K}
    \Big(\|\nabla\cdot\bsigma\|^{2}_{l,K}+\|\bsigma\|^{2}_{l,K}+ h_K\, \|u\|^{2}_{l+2,K}\Big)\\[2mm] \label{thm:5.4}
 &&+\!\!\sum_{K\in\cT^{d}_{h}}   h^{2l}_{K}
  \left( \dfrac{ \epsilon^{2} } {h^2_{K}} \,\|\nabla\cdot\bsigma\|^{2}_{l,K}
   +\|\bsigma\|^{2}_{l,K}
   + \epsilon  \,\|u\|^{2}_{l+2,K}\right),
 \end{eqnarray}
where constants $C_{i}>0$ are independent of $\epsilon$.
\end{thm}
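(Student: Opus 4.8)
The plan is to mimic the proof of Theorem~\ref{thm:apierr:w} essentially verbatim, changing only the approximation exponent used for the primal variable $u$. The discrete space is unchanged, $\cu^h_i=(\Sigma^l_h\times V^{l+1}_h)\cap\cu_i$, so the Raviart--Thomas estimate (\ref{app1}) for $\bsigma$ is still invoked with exponent $l$; however, since $V^{l+1}_h$ contains the full space of degree-$(l+1)$ polynomials and $l+1\le k+1=l+1$, the estimate (\ref{app2}) may now be applied with exponent $l+1$, yielding $\inf_v\|u-v\|_1\le Ch^{l+1}\|u\|_{l+2}$ for $u\in H^{l+2}$. This single change replaces every occurrence of $h^l_K\|u\|_{l+1,K}$ in the old argument by $h^{l+1}_K\|u\|_{l+2,K}$, while leaving all $\bsigma$-terms intact.

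Concretely, I would split the error as $\bE^i=\bE_I+\bE^i_h$ and $e^i=e_I+e^i_h$ using the same commuting interpolants (\ref{commut}), apply the triangle inequality (\ref{ineq:errtri}), and then re-derive the two pieces. For the interpolation part $\vertiii{(\bE_I,\,e_I)}_i$ the bound (\ref{ineqn:intererr:23w}) is reused with the $u$-terms upgraded to $h^{2l+2}_K\|u\|^2_{l+2,K}$ and the boundary interpolation bound upgraded to $h^{2l+2+\alpha_i}_K\|u\|^2_{l+2,K}$. For the discrete part I would again invoke the coercivity of Lemma~\ref{lmm:disnorm:2w} together with the orthogonality (\ref{eqn:ortho}) to reduce to the four terms in (\ref{ieqn:diserr:w}), and bound $I^i_1,\dots,I^i_4$ exactly as in (\ref{ineqn:I1:w})--(\ref{ineqn:I4:w}), now using $\|\nabla e_I\|_K,\,\|e_I\|_K\le Ch^{l+1}_K\|u\|_{l+2,K}$.

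The decisive bookkeeping occurs after inserting the definition (\ref{def:deltaK}) of $\delta_K$ into the $I^i_3$ estimate (\ref{ineqn:I3:w}): the factor $\delta^{-1}_K h^{2l+2}_K\|u\|^2_{l+2,K}$ becomes $h^{2l+1}_K\|u\|^2_{l+2,K}$ on $\cT^c_h$ (where $\delta^{-1}_K\sim h^{-1}_K$) and $\epsilon\,h^{2l}_K\|u\|^2_{l+2,K}$ on $\cT^d_h$ (where $\delta^{-1}_K=\epsilon/h^2_K$), which are precisely the $u$-contributions claimed in (\ref{thm:5.4}); the $I^i_2$ term (\ref{ineqn:I2:w}) independently produces $\epsilon\,h^{2l}_K\|u\|^2_{l+2,K}$ matching the diffusion sum. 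The divergence contributions from $I^i_3$ remain $\epsilon\,h^{2l-1}_K\|\nabla\cdot\bsigma\|^2_{l,K}$ on $\cT^c_h$ and $\epsilon^2 h^{2l-2}_K\|\nabla\cdot\bsigma\|^2_{l,K}$ on $\cT^d_h$, identical to Theorem~\ref{thm:apierr:w}, while the $\|\bsigma\|^2_{l,K}$ term is unchanged at $h^{2l}_K$.

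The main obstacle will not be conceptual but rather ensuring the two P\'eclet-based comparisons deliver exactly the exponents in (\ref{thm:5.4}). On convection elements one uses $\epsilon\le Ch_K$ (from ${P\!e}_K>1$) to rewrite the divergence contribution $\epsilon\,h^{2l-1}_K\|\nabla\cdot\bsigma\|^2_{l,K}$ as the cleaner $h^{2l}_K\|\nabla\cdot\bsigma\|^2_{l,K}$ and to verify that the $I^i_1$ leftover $\epsilon\,h^{2l}_K\|\nabla\cdot\bsigma\|^2_{l,K}$ is subordinate; on diffusion elements one uses $\epsilon\ge h_K$ (from ${P\!e}_K\le1$, guaranteed on $\cT^\partial_h$ by (\ref{ass:elebdry})) repeatedly to absorb the same leftover into $\epsilon^2 h^{2l-2}_K\|\nabla\cdot\bsigma\|^2_{l,K}$ and to collapse the boundary term $I^i_4$ and the higher-order pieces into the single factor $\epsilon\,h^{2l}_K\|u\|^2_{l+2,K}$. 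Throughout, the Galerkin-error factors $\|e^i_h\|$, $\|\bE^i_h\|$, $\|\epsilon^{1/2}\nabla e^i_h\|$, together with the streamline and boundary seminorms of $(\bE^i_h,\,e^i_h)$, must be absorbed into the left-hand side via Young's inequality, exactly as in the proof of Theorem~\ref{thm:apierr:w}.
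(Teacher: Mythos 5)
Your proposal is correct and is essentially the paper's own proof: the paper proves Theorem~\ref{thm:apierr:w2} by repeating the argument of Theorem~\ref{thm:apierr:w} with the single change $\|u-u_I\|_{1}\leq C\,h^{l+1}\|u\|_{l+2}$, which is exactly the substitution you carry out, and your $\delta_K$-bookkeeping on $\cT^c_h$ and $\cT^d_h$ correctly reproduces the exponents in (\ref{thm:5.4}).
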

\begin{proof}
The a priori error estimate in (\ref{thm:5.4}) may be obtained in a similar fashion by noting that 
\[
\|u-u_I\|_{1} \leq C\,h^{l+1}\|u\|_{l+2}.
\]
\end{proof}

\section{Adaptive algorithm}
\label{sec:AMR}
\setcounter{equation}{0}

Asymptotic analysis (see, e.g., \cite{eck:79}) shows that the solution of a convection-dominated diffusion-reaction problem 
consists of two parts: the solution of the reduced equation ($\epsilon = 0$) and the correction, 
i.e., the boundary or interior layers. The boundary and interior layers are narrow regions where derivatives of the solution
change dramatically. For example, for the following problem~\cite{eck:79}:
  \begin{eqnarray*}
  \left\{
 \begin{array}{rl}
 -\epsilon\, \Delta u + \dfrac{\partial u}{\partial y} = f & \text{in}\,\, \Om  = (0,1)^2,\\[4mm]
 u = 0 & \text{on}\,\, \partial \Omega,
\end{array} \right.
\end{eqnarray*}
the exponential layer is of width $\cO(\epsilon)$ at $y = 1$, and the width of the parabolic boundary layers is $\cO(\epsilon^{1/2})$ 
at both $x = 0$ and $x = 1$.  Therefore, two sets of largely different scales exist simultaneously in the convection-dominated 
diffusion problem, and hence it is difficult computationally. 

On the one hand, one can apply the small scale over the entire domain, i.e., to use uniform fine meshes. 
With such a fine mesh, the standard Galerkin finite element method can also produce a good approximation. 
However, it is computationally inefficient due to the small region of the boundary and/or interior layers.
On the other hand, one can use the large scale over the entire domain. 
If the outflow boundary conditions are imposed strongly, the numerical solution (away from the boundary layers) will be polluted. 
In contrast, if the outflow boundary conditions are imposed weakly, the boundary layers can not be resolved 
(see, e.g., numerical results in~\cite{am09,chenfuliqiu}).

Neither of the above two approaches leads to a satisfactory numerical scheme. 
The failure is due to the fact that these approaches ignore this intrinsic property of the convection-dominated diffusion problem.
In contrast, the Shishkin mesh is aware of and respect it. 
Basically, the Shishkin mesh is a piecewise uniform mesh. 
In the diffusion-dominated region where the layers stand, it is a fine mesh suitable to the layer and in the convective region, 
it turns to be a coarse mesh. The disadvantage of the Shishkin mesh is that it needs the a priori information of the solution,
such as the location and the width of the layer, 
in order to construct a mesh of high quality. 
However, this information is not always available in advance, especially, for a complex problem. 

Based on the above considerations, we employ adaptive least-squares finite element methods. 
The least-squares estimators are simply defined as the value of the least-squares functionals at the current approximation.
To this end, for  each element $K\in\cT_h$, denote the local least-squares functionals by
\begin{eqnarray*}
G^h_{1,K}(\btau,\,v;\,f) 
 &=&  \| \btau+\epsilon^{1/2}\,\nabla v\|^{2}_K+\|\epsilon^{1/2}\,\nabla\cdot\btau+\bbeta\cdot\nabla v+c\, v-f\|^{2}_K, \\[4mm]
G^h_{2,K}(\btau,\,v;\,f) 
&=& \left\{\begin{array}{ll}
G^h_{1,K}(\btau,\,v;\,f),&\quad\text{if}\,K\cap \Gamma_{+} = \emptyset,  \\[3mm]
G^h_{1,K}(\btau,\,v;\,f)+\sum\limits_{e\in K\cap\Gamma_{+}} h^{-1}_e \|\epsilon^{-1/2} v\|^2_{0,\,e},&\quad\text{otherwise},
\end{array}
\right. \\[4mm]
\mbox{and }\,\, G^h_{3,K}(\btau,\,v;\,f) 
 &=& \left\{\begin{array}{ll}
G^h_{1,K}(\btau,\,v;\,f),&\quad\text{if}\,K\cap \Gamma_{+} = \emptyset,  \\[3mm]
G^h_{1,K}(\btau,\,v;\,f)+\sum\limits_{e\in K\cap\Gamma_{+}} h^{-1}_e \|v\|^2_{0,\,e},&\quad\text{otherwise}.
\end{array}
\right.
\end{eqnarray*}
Let $(\hat{\bsigma}^h_i,\,\hat{u}^h_i)$ be the current approximations to the solutions of (\ref{eqn:varprom}) for $i=1,\,2,\,3$. 
Then the least-squares indicators are simply the square root of the value of the local least-squares functionals at the current approximation:
 \beq\label{indicator}
  \eta^{i}_K  = G^h_{i, K}\,(\hat{\bsigma}^{i}_{h},\,\hat{u}^{i}_{h};\,f)^{1/2}
  \eeq
for all $K\in \cT_h$ and for $i=1,\,2,\,3$. The least-squares estimators are
 \beq\label{estimator}
 \eta^{i} 
 = \left( \sum_{K\in \cT_h}  \big(\eta^{i}_K \big)^2\right)^{1/2}
 = G^h_{i}\,(\hat{\bsigma}^{i}_{h},\, \hat{u}^{i}_{h};\,f)^{1/2}
 \eeq
for $i=1,\,2,\,3$. 

Let $(\bsigma,\,u)$ be the solution of (\ref{eqn:vf}) and denote the true errors by 
\[
\hat{\bE}^i= \bsigma-\hat{\bsigma}^i_{h}
\quad  \mbox{and} \quad \hat{e}^i= u-\hat{u}^1_{h}
\quad\mbox{for}\quad i=1,\,2,\,3.
\]

\begin{thm}\label{thm:postestor1}
There exist positive constants $C_{e,1}$ and $C_{r,1}$ independent of $\epsilon$ such that 
 \begin{eqnarray}\label{ineq:coeff1}
 \eta^{1}_K \leq 
 C_{e,1}\, \left(M_{1,K}(\hat{\bE}^1,\, \hat{e}^{1}) 
 + \| \bbeta\cdot\nabla \, \hat{e}^1\|_{K}^2 + \epsilon\,\|\nabla\cdot \hat{\bE}^1\|^2_K\right)^{1/2}
\end{eqnarray}
for all $K\in\cT$ and that 
\begin{equation}\label{ineq:relia1}
 M_{1}(\hat{\bE}^1,\, \hat{e}^{1})^{1/2}
 \leq C_{r,1}\, \eta^{1}.
\end{equation}
\end{thm}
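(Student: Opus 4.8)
The plan is to exploit the defining feature of a least-squares method: the value of the functional at any pair equals the squared $L^2$-residual of the first-order system, and the residual of the computed solution is exactly $-1$ times the residual of the error. Since Theorem~\ref{thm:postestor1} concerns only $i=1$, no outflow boundary functional is present, so $\eta^1_K$ and $G^h_{1,K}$ coincide with the elementwise restriction of $G_1$ in (\ref{fnal:s}) with no edge contribution; this will let the global sum collapse cleanly.

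First I would record the local \emph{residual identity}. Because the exact solution $(\bsigma,\,u)$ satisfies both equations of the first-order system pointwise, each argument of $G_1$ vanishes at $(\bsigma,\,u;\,f)$. Writing $\hat{\bsigma}^1_h=\bsigma-\hat{\bE}^1$ and $\hat{u}^1_h=u-\hat{e}^1$ and subtracting, one finds
\[
\hat{\bsigma}^1_h+\epsilon^{1/2}\nabla\hat{u}^1_h=-(\hat{\bE}^1+\epsilon^{1/2}\nabla\hat{e}^1),\qquad \epsilon^{1/2}\nabla\cdot\hat{\bsigma}^1_h+\bbeta\cdot\nabla\hat{u}^1_h+c\,\hat{u}^1_h-f=-(\epsilon^{1/2}\nabla\cdot\hat{\bE}^1+\bbeta\cdot\nabla\hat{e}^1+c\,\hat{e}^1).
\]
Restricting the $L^2$-norms to a single element $K$ gives $(\eta^1_K)^2=G^h_{1,K}(\hat{\bsigma}^1_h,\,\hat{u}^1_h;\,f)=G^h_{1,K}(\hat{\bE}^1,\,\hat{e}^1;\,0)$, and summing over $K\in\cT_h$ yields the global version $(\eta^1)^2=G_1(\hat{\bE}^1,\,\hat{e}^1;\,0)$. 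This identity is the crux; both remaining estimates are one-sided bounds on this single functional.

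For \textbf{reliability} (\ref{ineq:relia1}), I would note that $(\hat{\bE}^1,\,\hat{e}^1)\in\cu_1$, since $\hat{\bsigma}^1_h\in\Sigma^k_h\subset H(\divv;\Om)$ and both $u$ and $\hat{u}^1_h$ satisfy the homogeneous Dirichlet condition strongly (for $i=1$ the outflow condition is imposed strongly, so $\hat{u}^1_h\in H^1_0(\Om)$), whence $\hat{e}^1\in H^1_0(\Om)$. The continuous coercivity bound (\ref{coercivity}) of Theorem~\ref{thm:2w} with $i=1$ then applies and gives $M_1(\hat{\bE}^1,\,\hat{e}^1)\le C_1\,G_1(\hat{\bE}^1,\,\hat{e}^1;\,0)=C_1\,(\eta^1)^2$; taking square roots yields (\ref{ineq:relia1}) with $C_{r,1}=C_1^{1/2}$, independent of $\epsilon$ precisely because $C_1$ is. For \textbf{efficiency} (\ref{ineq:coeff1}), I would bound the two summands of $(\eta^1_K)^2=G^h_{1,K}(\hat{\bE}^1,\,\hat{e}^1;\,0)$ from above by the triangle inequality: the first residual gives $\|\hat{\bE}^1+\epsilon^{1/2}\nabla\hat{e}^1\|_K\le\|\hat{\bE}^1\|_K+\|\epsilon^{1/2}\nabla\hat{e}^1\|_K$, and the second gives $\|\epsilon^{1/2}\nabla\cdot\hat{\bE}^1+\bbeta\cdot\nabla\hat{e}^1+c\,\hat{e}^1\|_K\le\epsilon^{1/2}\|\nabla\cdot\hat{\bE}^1\|_K+\|\bbeta\cdot\nabla\hat{e}^1\|_K+\gamma\,\|\hat{e}^1\|_K$ using $\|c\|_\infty\le\gamma$. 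Squaring each bound via $(\sum_{j=1}^n t_j)^2\le n\sum_{j=1}^n t_j^2$ identifies the five resulting quantities $\|\hat{\bE}^1\|_K^2$, $\|\epsilon^{1/2}\nabla\hat{e}^1\|_K^2$, $\|\hat{e}^1\|_K^2$, $\epsilon\,\|\nabla\cdot\hat{\bE}^1\|_K^2$, and $\|\bbeta\cdot\nabla\hat{e}^1\|_K^2$ as exactly $M_{1,K}(\hat{\bE}^1,\,\hat{e}^1)+\|\bbeta\cdot\nabla\hat{e}^1\|_K^2+\epsilon\,\|\nabla\cdot\hat{\bE}^1\|_K^2$, with $C_{e,1}$ depending only on $\gamma$ and absolute constants.

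The argument is essentially immediate once the residual identity is in hand, so there is no genuine analytic difficulty; the only point demanding care is the $\epsilon$-robustness of the constants. For reliability this is inherited verbatim from the $\epsilon$-independence of $C_1$ in Theorem~\ref{thm:2w}, and for efficiency it follows because every step is a triangle estimate with $\epsilon$-free constants together with the uniform bound $\|c\|_\infty\le\gamma$. I would also verify explicitly that no edge functional enters for $i=1$, which is what allows the elementwise identities to sum cleanly to $G_1(\hat{\bE}^1,\,\hat{e}^1;\,0)$ and makes the reliability constant tie directly to the coercivity constant.
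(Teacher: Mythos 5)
Your proposal is correct and follows essentially the same route as the paper's own proof: the residual identity $\big(\eta^{1}_K\big)^2 = G^h_{1,K}(\hat{\bE}^1,\,\hat{e}^{1};\,0)$ obtained from the fact that the exact solution satisfies (\ref{sys:dif:eq1}), then the triangle inequality (with $\|c\|_\infty\le\gamma$) for the efficiency bound and the coercivity of Theorem~\ref{thm:2w} for the reliability bound. You merely spell out details the paper leaves implicit, such as verifying $(\hat{\bE}^1,\,\hat{e}^1)\in\cu_1$ and tracking the $\epsilon$-independence of the constants.
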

\begin{proof}
Since the exact solution $(\bsigma,\,u)$ satisfies (\ref{sys:dif:eq1}), we have
\[ 
 \big(\eta^{1}_K\big)^2 
 = G^h_{1,K}(\hat{\bE}^1,\, \hat{e}^{1};\,0)
 \quad\mbox{and}\quad
 \big(\eta^{1}\big)^2 =
 G^h_{1}(\hat{\bE}^1,\, \hat{e}^{1};\,0).
\] 
which, together with the triangle inequality and Theorem~\ref{thm:2w},
imply the efficiency and the reliability bounds, respectively.
\end{proof}
\begin{thm}\label{thm:postestor}
There exist positive constants $C_{e,i}$ independent of $\epsilon$ such that 
\begin{eqnarray}\label{ineq:coeff}
C_{e,\,i}\,\big(\eta^{i}_K \big)^2
  \leq M^h_{i,K}(\hat{\bE}^i,\,  \hat{e}^{i}) 
 + \| \bbeta\cdot\nabla \hat{e}^{i}\|_{K}^2 + \epsilon\,\|\nabla\cdot \hat{\bE}^i\|^2
\end{eqnarray}
for all $K\in\cT$ and $i=2,\,3$.
\end{thm}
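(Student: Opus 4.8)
The plan is to treat this exactly as the ``easy'' (continuity/efficiency) direction of the estimator--error equivalence, paralleling the proof of Theorem~\ref{thm:postestor1} and using only the triangle inequality rather than any coercivity. The starting point is that the exact solution $(\bsigma,\,u)$ satisfies the first-order system (\ref{sys:dif:eq1}) pointwise together with the homogeneous boundary condition (\ref{bcd:dir:dif}), so in particular $u=0$ on $\Gamma_{+}$. Substituting $\hat{\bsigma}^i_h=\bsigma-\hat{\bE}^i$ and $\hat{u}^i_h=u-\hat{e}^i$ into the local indicator and cancelling the exact residuals, I obtain
\[
 \big(\eta^{i}_K\big)^2 = G^h_{i,K}(\hat{\bsigma}^i_h,\,\hat{u}^i_h;\,f) = G^h_{i,K}(\hat{\bE}^i,\,\hat{e}^i;\,0),
\]
where the weighted edge contribution reduces correctly because $u=0$ on $\Gamma_{+}$ forces $\hat{u}^i_h=-\hat{e}^i$ there.

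Next I would expand $G^h_{i,K}(\hat{\bE}^i,\,\hat{e}^i;\,0)$ and bound each summand by a term already present on the right-hand side of (\ref{ineq:coeff}). For the dual-variable residual the triangle inequality gives $\|\hat{\bE}^i+\epsilon^{1/2}\nabla\hat{e}^i\|^2_K \le 2\|\hat{\bE}^i\|^2_K + 2\epsilon\|\nabla\hat{e}^i\|^2_K$, both of which sit inside $M_{1,K}(\hat{\bE}^i,\,\hat{e}^i)$ and hence inside $M^h_{i,K}$. For the equation residual, expanding yields the three pieces $\epsilon\|\nabla\cdot\hat{\bE}^i\|^2_K$, $\|\bbeta\cdot\nabla\hat{e}^i\|^2_K$, and $\|c\,\hat{e}^i\|^2_K$: the first two appear verbatim on the right-hand side, while the third is controlled by $\gamma^2\|\hat{e}^i\|^2_K$ via assumption~(1), and $\|\hat{e}^i\|^2_K$ again lies in $M_{1,K}$. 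Finally, the weighted boundary term in $G^h_{i,K}$ coincides identically with the edge term defining $M^h_{i,K}$, namely the $\epsilon^{-1/2}$-weighted version for $i=2$ and the unweighted version for $i=3$, so it is absorbed with no loss.

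Collecting these bounds shows that $(\eta^i_K)^2$ is dominated by a fixed multiple of $M^h_{i,K}(\hat{\bE}^i,\,\hat{e}^i)+\|\bbeta\cdot\nabla\hat{e}^i\|^2_K+\epsilon\|\nabla\cdot\hat{\bE}^i\|^2_K$, which is precisely (\ref{ineq:coeff}) after renaming the constant $C_{e,i}$. Since every splitting constant and the reaction bound $\gamma$ are independent of $\epsilon$, so is $C_{e,i}$. I do not expect a genuine obstacle: this is continuity of the functional, and the only step needing care is the bookkeeping of the boundary edge terms so that the weight in $G^h_{i,K}$ matches the weight in $M^h_{i,K}$ for each $i=2,3$. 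By contrast, the complementary \emph{reliability} bound would have to invoke the coercivity of Theorem~\ref{thm:compnorm} and Lemma~\ref{lmm:disnorm:2w}, but that direction is not part of the present statement.
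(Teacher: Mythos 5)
Your proposal is correct and follows essentially the same route as the paper: use the fact that the exact solution satisfies the first-order system and the homogeneous outflow boundary condition to rewrite $(\eta^i_K)^2$ as $G^h_{i,K}(\hat{\bE}^i,\hat{e}^i;0)$, then bound each residual term by the triangle/Cauchy--Schwarz inequality, with the boundary edge terms matching those of $M^h_{i,K}$ exactly. The paper compresses the term-by-term expansion into the phrase ``follows from the Cauchy--Schwarz inequality,'' so your write-up is simply a more explicit version of the same argument, with the same $\epsilon$-independent constants.
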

\begin{proof}
Let $\alpha_i= -1$ for $i=2$ or $0$ for $i=3$. With the fact that $(\bsigma,\,u)$ is the exact solution satisfying (\ref{sys:dif:eq1}), we have
\begin{eqnarray}\nonumber
& &\eta^{i}(\hat{\bsigma}^h_i,\,\hat{u}^h_i)^2 = G^h_{i}(\hat{\bsigma}^h_i,\,\hat{u}^h_i;\,f)\\[2mm] \nonumber
&=& \| \hat{\bsigma}^h_i+\epsilon^{1/2}\,\nabla \hat{u}^h_i\|^{2}+\|\epsilon^{1/2}\,\nabla\cdot\hat{\bsigma}^h_i+\bbeta\cdot\nabla \hat{u}^h_i+c\, \hat{u}^h_i-f\|^{2}+\sum_{e\in\cE_h\cap\Gamma_{{+}}}\epsilon^{\alpha_i}\,h^{-1}_{e}\|\hat{u}^h_i\|^{2}_{0,e}\\[2mm] \nonumber
&=& \| \hat{\bE}^i+\epsilon^{1/2}\,\nabla \hat{e}^i\|^{2}+\|\epsilon^{1/2}\,\nabla\cdot\hat{\bE}^i+\bbeta\cdot\nabla \hat{e}^i+c\, \hat{e}^i\|^{2}+\sum_{e\in\cE_h\cap\Gamma_{{+}}}\epsilon^{\alpha_i}\,h^{-1}_{e}\|\hat{e}^i\|\\[2mm] \label{eqn:posterr}
&=& G^h_{i}(\hat{\bE}^i,\,\hat{e}^i;\,0),
\end{eqnarray}
with which, 
the efficiency bound simply follows from (\ref{eqn:posterr}) and the Cauchy-Schwarz inequality.
\end{proof}

In the remainder of this section, we describe the standard adaptive mesh refinement algorithm.
Starting with an initial triangulation $\cT_0$, a sequence of nested triangulations $\{\cT_{l}\}$ is generated through the well known AFEM-Loop:
\[
\text{\textbf{SOLVE}}  \longrightarrow \text{\textbf{ESTIMATE}} \longrightarrow \text{\textbf{MARK}} \longrightarrow \text{\textbf{REFINE}}.
\]

The \textbf{SOLVE} step solves (\ref{eqn:varprom}) in the finite element space corresponding to the mesh $\cT_l$ for a numerical approximation $(\bsigma^i_h(l),\,u^i_h(l)) \in \cu^h_i(l)$, where $\cu^h_i(l)$ is the finite element space defined on $\cT_l$. Hereafter, we shall explicitly express the dependence of a quantity on the level $l$ by either the subscript like $\cT_l$ or the variable like $\cu^h_i(l)$.

The \textbf{ESTIMATE} step computes the indicators $ \{\eta^{i}_K (l)\}$ and the estimator $\eta^{i} (l)$ defined in (\ref{indicator}) and (\ref{estimator}), respectively. 

The way to choose elements for refinement influences the efficiency of the adaptive algorithm. If most of elements are marked for refinement, 
then it is comparable to uniform refinement, which does not take full advantage of the adaptive algorithm and results in redundant degrees of freedom. On the other hand, if few elements are refined, then it requires many iterations, which undermines the efficiency of the adaptive algorithm, since each iteration is costly. For the singularly perturbed problems, it is well known that the indicators associated with the elements in the layer region are much larger than others. Therefore, we \textbf{MARK} by the maximum algorithm, which defines the set $\hat{\cT}_l$ of marked elements such that for all $K \in \hat{\cT}_l$
\[
\eta^{i}_K (l)  \ge \theta\,\max_{K\in\cT_l} \eta^{i}_K (l).
\]

The \textbf{REFINE} step is to bisect all the triangles in $\hat{\cT}_l$ into two sub-triangles to generate a new triangulation $\cT_{l+1}$. Note that some triangles in $\cT_l\, \backslash\, \hat{\cT}_l$ adjacent to triangles in $\hat{\cT}_l$ are also refined in order to avoid hanging nodes.

In summary, the adaptive least-squares finite element algorithm can be cast as follows: with the initial mesh $\cT_0$, marking parameter $\theta\in(0,1)$, and the maximal number of iteration $maxIt$, for $l = 0, 1, \cdots, maxIt$, do
\begin{enumerate}
\item[(1)] $(\bsigma^i_h(l),\,u^i_h(l)) = \text{\textbf{SOLVE}}(\cT_l);$
\item[(2)] $\{\eta^{i}_K (l)\} = \text{\textbf{ESTIMATE}}(\cT_l,\,\bsigma^i_h(l),\,u^i_h(l));$
\item[(3)] $\hat{\cT_l} = \text{\textbf{MARK}}(\cT_l,\,\{\eta^{i}_K (l)\});$
\item[(4)] $\cT_{l+1} = \text{\textbf{REFINE}}(\cT_l,\, \hat{\cT_l}).$
\end{enumerate}

\section{Numerical experiments}
\label{sec:numtest}
\setcounter{equation}{0}

In this section, we conduct several numerical experiments on two model problems used by many authors (see, e.g.,~\cite{am09,chenfuliqiu}). 
Both the model problems are defined in the unit square and all numerical experiments are started with the same initial mesh, which consists of sixteen isosceles right triangles. The marking parameter $\theta$ is chosen to be $0.6$.

\subsection{Boundary layer}
\label{exam:blayer}

In this example, $\bbeta  = [1,1]^{T}$, and $c=0$, and the external force $f$ is chosen such that the exact solution is
\[
u(x,y) = \sin\frac{\pi x}{2}+\sin\frac{\pi y}{2}\Big(1-\sin\frac{\pi x}{2}\Big)+\frac{e^{-1/\epsilon}-e^{-(1-x)(1-y)/\epsilon}}{1-e^{-1/\epsilon}}.
\]
This solution is smooth, but develops boundary layers at $x = 1$ and $y = 1$ with width $\cO(\epsilon)$. This example is suitable for testing
capability of the numerical approximations on resolving the boundary layers.

In this numerical experiment, $\epsilon = 10^{-3}$.  Given the tolerance $tol = 0.5$, computation is terminated if
\begin{eqnarray}\label{ine:stop}
\eta^i(l) \leq tol.
\end{eqnarray}
Since the exact solution is available, the true error is computed and the effectivity index is defined as follows: 
\begin{eqnarray}
\text{eff-index} := \frac{\eta^i(\bsigma^i_h,\,u^i_h)}{\vertiii{(\bsigma-\bsigma^{i}_{h},\, u - u^{i}_{h})}_{i}}.
\end{eqnarray}
\begin{figure}[h!]

\begin{subfigure}{0.5\textwidth}
\centering
\includegraphics[width = \textwidth]{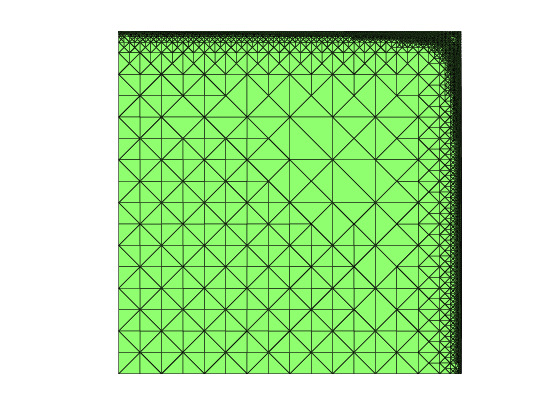}
\end{subfigure}
\begin{subfigure}{0.5\textwidth}
\centering
\includegraphics[width = \textwidth]{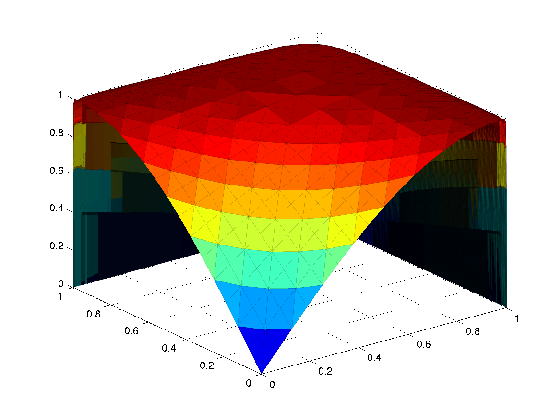}
\end{subfigure}
\begin{subfigure}{0.5\textwidth}
\centering
\includegraphics[width = \textwidth]{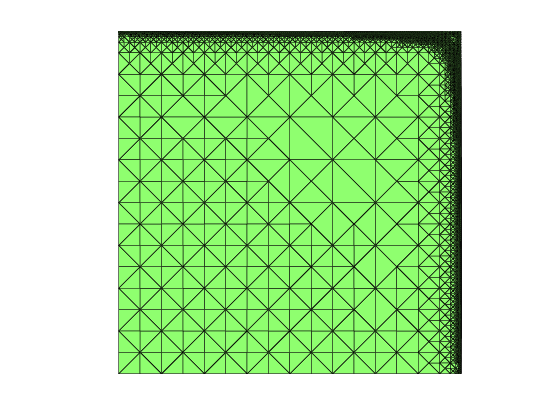}
\end{subfigure}
\begin{subfigure}{0.5\textwidth}
\centering
\includegraphics[width = \textwidth]{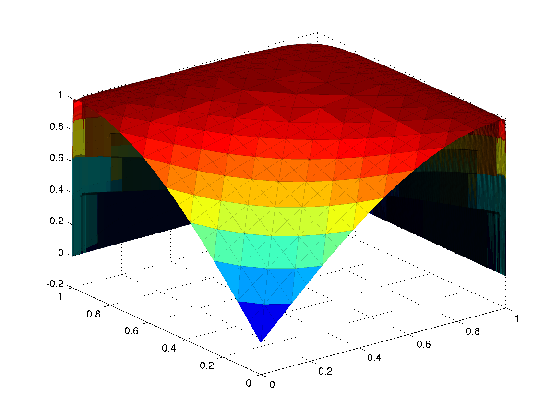}
\end{subfigure}
\begin{subfigure}{0.5\textwidth}
\centering
\includegraphics[width = \textwidth]{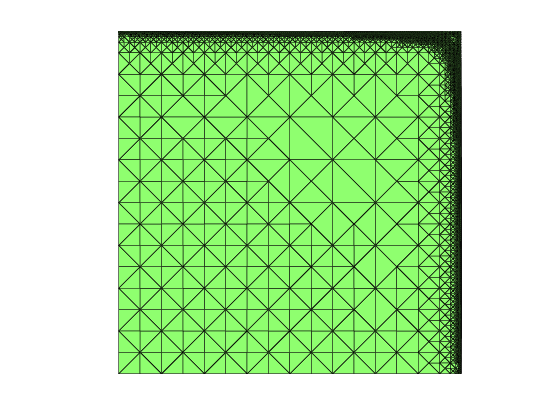}
\end{subfigure}
\begin{subfigure}{0.5\textwidth}
\centering
\includegraphics[width = \textwidth]{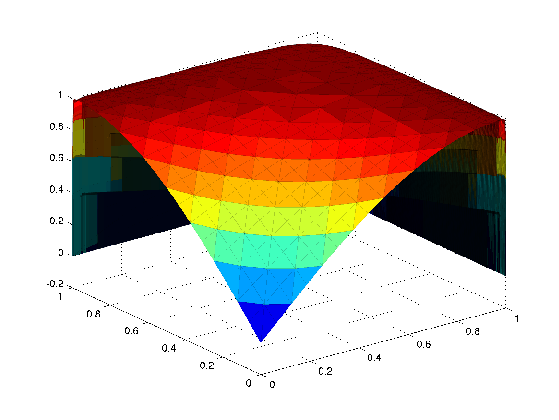}
\end{subfigure}
\caption{The final meshes and the numerical solutions are, respectively, displayed in the first and the second columns
and the rows are corresponding to $i=1,\,2,\,3$.}
\label{fig:ex1:solu}
\end{figure}
The final meshes are displayed in the first column of Figure~\ref{fig:ex1:solu} 
when the stopping criterion (\ref{ine:stop}) is satisfied. They clearly show that the refinements cluster around the boundary layer area. 
The numerical solutions on the final meshes are depicted in the second column of Figure~\ref{fig:ex1:solu}. 
All the three methods successfully capture the sharp boundary layers, and no visible oscillation appears in the numerical solutions. 
Reported in Figure~\ref{fig:ex1:conv} is the convergence rates of the numerical solutions. The errors with the norm $\vertiii{\cdot}_i$ that are used in the a priori error estimate are tracked, which converge in the order of $(DoF)^{-1}$. Moreover, the convergence rate is independent of the value of $\epsilon$. This is also verified by the test problem with $\epsilon = 10^{-4}$, where the convergence rate does not deteriorate 
(see the second column of Figure~\ref{fig:ex1:conv}). 
\begin{figure}[h!]

\begin{subfigure}{0.5\textwidth}
\centering
\includegraphics[width = \textwidth]{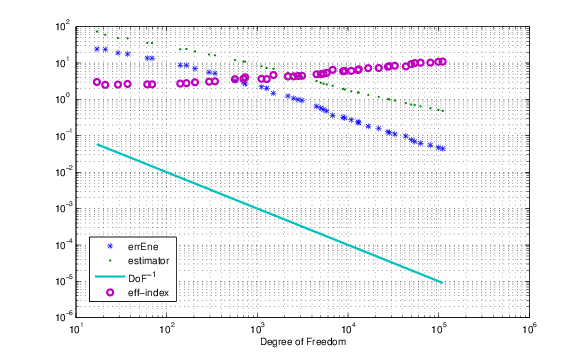}
\end{subfigure}
\begin{subfigure}{0.5\textwidth}
\centering
\includegraphics[width = \textwidth]{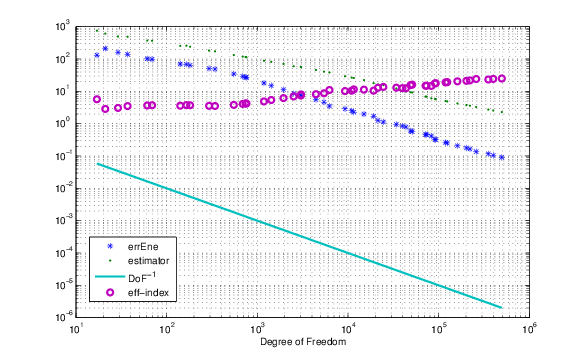}
\end{subfigure}
\begin{subfigure}{0.5\textwidth}
\centering
\includegraphics[width = \textwidth]{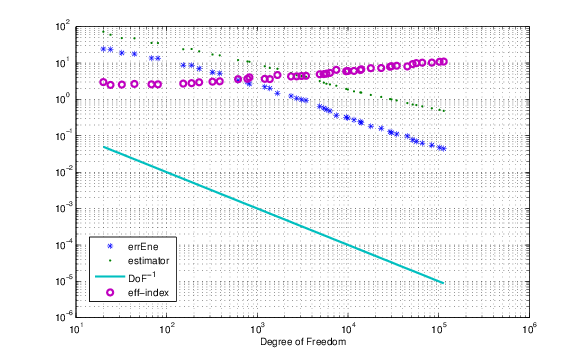}
\end{subfigure}
\begin{subfigure}{0.5\textwidth}
\centering
\includegraphics[width = \textwidth]{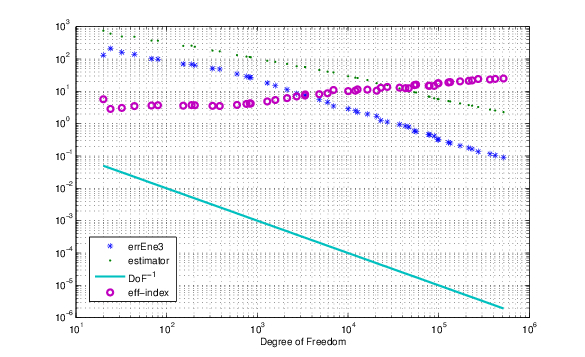}
\end{subfigure}
\begin{subfigure}{0.5\textwidth}
\centering
\includegraphics[width = \textwidth]{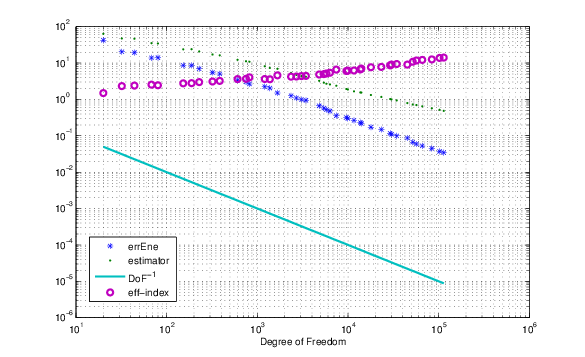}
\end{subfigure}
\begin{subfigure}{0.5\textwidth}
\centering
\includegraphics[width = \textwidth]{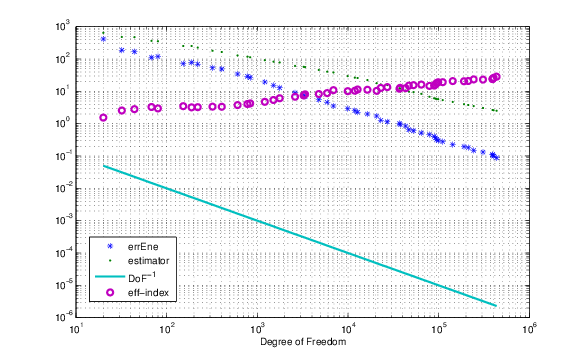}
\end{subfigure}
\caption{The convergence rates corresponding to $\epsilon = 10^{-3}$ and $10^{-4}$
are displayed in the first and the second columns, respectively,
and the rows are corresponding to $i=1,\,2,\,3$.}
\label{fig:ex1:conv}
\end{figure}

\subsection{Interior layer}
\label{exam:intlayer}
In the second example, $\bbeta = [1/2, \sqrt{3}/2]^{T}$, $c=0$, $f = 0$, and the boundary condition is
\[
u = \left\{
\begin{array}{cl}
1,&\mbox{on}\, \{(x,\,y) : y = 0, 0\leq x\leq 1\},\\[2mm]
1,&\mbox{on}\, \{(x,\,y) :  x = 0, 0\leq y\leq 1/5\},\\[2mm]
0,&\mbox{otherwise}.
\end{array}
\right.
\]
The exact solution of the problem remains unknown. However, it is known that, additional to the boundary layers, the solution develops an interior layer along the line $ y = \sqrt{3}\, x + 0.2$ due to the discontinuity at $(0,0.2)$ of the boundary condition. The problem is chosen to test whether the formulations can capture the interior layers.

Figure~\ref{fig:ex2:solu} shows that all the three methods capture both the boundary and the interior layers. 
Moreover, the numerical solutions do not exhibit any visible oscillation, which is much better than the results reported in ~\cite{am09}.
\begin{figure}[h!]

\begin{subfigure}{0.32\textwidth}
\centering
\includegraphics[width = \textwidth]{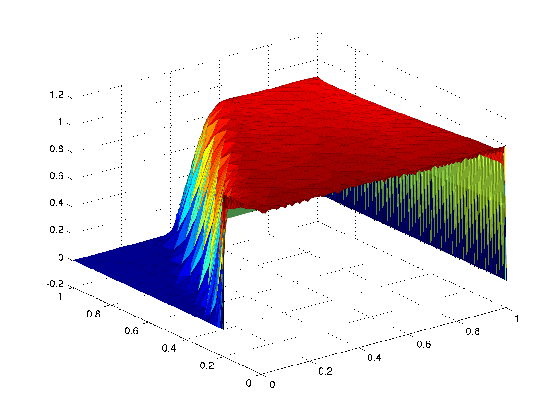}
\end{subfigure}
\begin{subfigure}{0.32\textwidth}
\centering
\includegraphics[width = \textwidth]{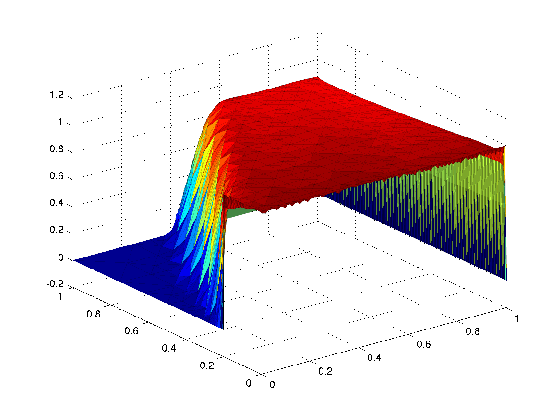}
\end{subfigure}
\begin{subfigure}{0.32\textwidth}
\centering
\includegraphics[width = \textwidth]{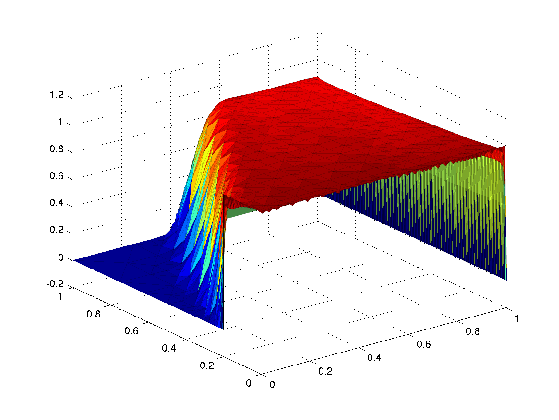}
\end{subfigure}
\caption{Numerical solutions corresponding to $i = 1,\,2,\,3$ from left to right.}
\label{fig:ex2:solu}
\end{figure}

\section*{Acknowledgements}

We thank Dr. Shuhao Cao for the discussion and helpful suggestions on the computation of the test problems.


\begin{thebibliography}{10}

\bibitem{adam:75}
{\sc D.A. Adams},
{\em Sobolev Spaces},
Academic Press, New York, 1975.

\bibitem{ang:95}
{\sc L. Angermann},
{\em Balanced a posteriori error estimates for finite volume type discretizations of convection-dominated elliptic problems},
Computing, 55:4 (1995), 305-323.

\bibitem{aabr:13}
{\sc M. Anisworth, A. Allends, G.R. Barrenechea, and R. Rankin},
{\em Fully computable a posteriori error bounds for stabilized FEM approximations of convecton-reaction-diffusion problems in three dimensions},
Int. J. Numer. Meth. Fluids, 73:9 (2013), 765-790.

\bibitem{am09}
{\sc B. Ayuso and L.D. Marini},
{\em Discountinuous Glerkin methods for advection-diffusion-reaction problem},
SIAM J. Numer. Anal., 47 (2009), 1391-1420.

\bibitem{AziKel:85}
{\sc A. Aziz and A. Stephens},
{\em Least-squares methods for elliptic systems},
Math. Comp., 44 (1985), 53-70.

\bibitem{ber:02}
{\sc S. Berron},
{\em Robustness in a posteriori error analysis for FEM flow models},
Numer. Math. 91:3 (2002), 389-422.

\bibitem{bg2}
{\sc P.B. Bochev and M.D. Gunzburger},
{\em Analysis of least-squares finite element methods for the Stokes equations},
Math. Comp., 63 (1994), 479--506.

\bibitem{BocGun:95}
{\sc P.B. Bochev and M.D. Gunzburger},
{\em Least-squares for the velocity-pressure-stress formulation of the Stokes equations},
Comput. Methods Appl. Mech. Engrg., 126 (1995), 267--287.

\bibitem{BocGun:98}
{\sc P.B. Bochev and M.D. Gunzburger},
{\em Finite element methods of least-squares type}, 
SIAM Rev., 40 (1998), 789--837.

\bibitem{BoBreFor:13}
{\sc D.~Boffi, F.~Brezzi, and M.~Fortin},
{\em Mixed Finite Element Methods and Applications},
Springer, New York, 2013.

\bibitem{BreSco:94}
{\sc S.C. Brenner and L.R. Scott},
{\em The Mathematical Theory of Finite Element Methods}, 
Springer, New York, 1994.

\bibitem{brr}
{\sc F.~Brezzi, J.~Rappaz, and P.A. Raviart},
{\em Finite-dimensional approximation of nonlinear problems,
Part}~1: {\it Branches of nonsingular solutions},
Numer. Math., 36 (1980), 1-25.

\bibitem{bv:84}
{\sc I. Babu$\breve{s}$ka and M. Vogelius},
{\em Feeback and adaptive finite element solution of one-dimensional boundary value problems},
Numer. Math., 44 (1984), 75-102.

\bibitem{clw}
{\sc Z. Cai, B. Lee, and P. Wang},
{\em Least-squares methods for incompressible newtonian fluid flow: linear stationary problems},
SIAM J. Numer. Anal., 42 (2004), 843-859.

\bibitem{cmm1}
{\sc Z. Cai, T. Manteuffel, and S. McCormick},
{\em First-order system least squares for velocity-vorticity-
pressure form of the Stokes equations, with application to linear elasticity},
Electron. Trans. Numer. Anal., 3 (1995), 150-159.

\bibitem{CaiManMC:97}
{\sc Z.~Cai, T.A. Manteuffel, and S.F. McCormick},
{\em First-order system least squares for the Stokes equations,
with application to linear elasticity},
SIAM J. Numer. Anal., 34 (1997), 1727-1741.

\bibitem{chenfuliqiu}
{\sc H. Chen, G. Fu, J. Li, and W. Qiu},
{\em First order least squares method with weakly imposed boundary condition for convection dominated diffusion problems}.
Comput. Math. Appl, 68 (2014), 1635-1652.

\bibitem{Cia:78}
{\sc P.G. Ciarlet},
{\em The Finite Element Method for Elliptic Problems},
North-Holland, Amsterdam, 1978.

\bibitem{dor:96}
{\sc W.D\"{o}rfler},
{\em A convergent adaptive algorithm for Poisson's equation},
SIAM J. Numeri. Anal., 33 (1996), 1106-1124.

\bibitem{eck:79}
{\sc W. Eckhaus},
{\em Asymptotic Analysis of Singular Perturbations},
North-Holland, Amsterdam, 1979.

\bibitem{hb:82}
{\sc T.J.R. Hughes and A. Brooks},
{\em Streamline upwind/Petrov Galerkin formulations for the convection dominated flows with particular emphasis on the incompressible Navier-Stokes equations},
Comput. Methods Appl. Mech. Engrg., 54 (1982), 199-259.

\bibitem{rt}
{\sc P.A. Raviart and J.M. Thomas},
{\it A mixed finite element method for $2$nd order elliptic problems},
in Mathematical Aspects of Finite Element Methods,
Lecture Notes in Math. 606,
I.~Galligani and E.~Magenes, eds.,
Springer, New York, 1977, 292-315.

\bibitem{Roos:08}
{\sc H.~Roos, M.~Stynes, and L.~Tobiska},
{\em Robust Numerical Methods for Singularly Perturbed Differential Equations},
Springer, Berlin, 2008.

\bibitem{ver:94}
{\sc R. Verfurth},
{\em A posteriori error estimation and adaptive mesh-refinement techniques},
J. Comput. Appl. Math., 50 (1994), 67-83.

\end{thebibliography}
\end{document}